\documentclass[12pt]{amsart}
\usepackage{dsfont,mathtools,amssymb}
\usepackage{color}

\mathtoolsset{showonlyrefs,showmanualtags}

\theoremstyle{definition}
\newtheorem{theorem}{Theorem}[section]
\newtheorem{proposition}[theorem]{Proposition}
\newtheorem{lemma}[theorem]{Lemma}
\newtheorem{corollary}[theorem]{Corollary}
\newtheorem{remark}[theorem]{Remark}

\newtheorem{assumption}[theorem]{Assumption}

\newtheorem*{acknowledgement}{Acknowledgement}
\numberwithin{equation}{section}
\numberwithin{figure}{section}

\newcommand{\eps}{\varepsilon}

\renewcommand{\rho}{\varrho}
\renewcommand{\phi}{\varphi}
\newcommand{\skp}[2]{\left\langle  #1 , #2 \right\rangle}
\newcommand{\h}{\frac{1}{2}}
\newcommand{\R}{\mathbb{R}}

\DeclareMathOperator{\var}{var}
\DeclareMathOperator{\cov}{cov}
\DeclareMathOperator{\Hess}{Hess}

\DeclareMathOperator{\osc}{osc}
\DeclareMathOperator{\Id}{Id}
\DeclareMathOperator{\diag}{diag}

\usepackage{hyperref}
\hypersetup{pdftex}

 \definecolor{darkblue}{rgb}{0,0,0.6}
 \hypersetup{
     pdftitle={A Brascamp-Lieb type covariance estimate},
     pdfauthor={Georg Menz},
     pdfsubject={MSC: primary 82B20; secondary 60K35, 82C26.},
     pdfkeywords={covariance estimate}{lattice system}{continuous spin}{decay of correlations}{Gibbs measure},
     colorlinks=true,   		
     linkcolor=darkblue,     	
     citecolor=darkblue,    	
     filecolor=darkblue,     	
     urlcolor=darkblue        	
 }

\title[A Brascamp-Lieb type covariance estimate]{A Brascamp-Lieb type covariance estimate}

\date{\today}

\subjclass[2000]{Primary 82B20; secondary 60K35, 82C26.}

\keywords {decay of correlations, Brascamp-Lieb, lattice systems, continuous spin}

\author{Georg Menz}
\address{Georg Menz\\ Stanford University}
\email{gmenz@stanford.edu}

\begin{document}
\begin{abstract}
In this article, we derive a new covariance estimate. The estimate has a similar structure as the Brascamp-Lieb inequality and is optimal for ferromagnetic Gaussian measures. It can be naturally applied to deduce decay of correlations of lattice systems of continuous spins. We also discuss the relation of the new estimate with known estimates like a weighted estimate due to Helffer~\& Ledoux. The main ingredient of the proof of the new estimate is a \emph{directional Poincar\`e inequality} which seems to be unknown. 
\end{abstract}

\maketitle

\section{Introduction} 

The main goal of this article is to deduce a new covariance estimate for a certain class of Gibbs measures 
\begin{equation*}
 \mu(dx) =  \frac{1}{Z} \ \exp{\left(-H(x)\right)} \ dx,  
\end{equation*}
 on a finite-dimensional Euclidean space $X$ (see Section~\ref{s_decay_of_correlations_linear} and Theorem~\ref{p_decay_of_correlations_linear} below). Here and later on,~$Z$ denotes a generic normalization constant turning~$\mu$ into a probability measure. The covariance estimate can be seen as an analogue of the Brascamp-Lieb inequality~(BLI), which estimates variances. The BLI was originally introduced by Brascamp \& Lieb in \cite{BL}:
\begin{theorem}[Brascamp \& Lieb]\label{p_brascamp_lieb}
  Let $H:X \to \mathbb{R}$ be a smooth strictly convex function. Then for all smooth functions $f$   
\begin{equation}\label{e_brascamp_lieb}
 \var_\mu(f) :=\int \left( f - \int f \ d\mu \right)^2 d\mu \leq \int \skp{\nabla f}{\left( \Hess H \right)^{-1} \nabla f} \ d \mu.  
 \end{equation}
\end{theorem}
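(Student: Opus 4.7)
My plan is to give a semigroup proof based on the Bakry--\'Emery $\Gamma_2$-calculus for the overdamped Langevin dynamics with invariant measure $\mu$; the same strategy underlies the classical Helffer--Sj\"ostrand representation of the variance. Let $L = \Delta - \nabla H \cdot \nabla$ denote the Langevin generator. Strict convexity of $H$ produces a spectral gap, so after subtracting its mean from $f$ the Poisson equation $-Lg = f$ admits a smooth solution $g$. Integration by parts against $\mu$ then gives
\begin{equation*}
\var_\mu(f) = \int f^2 \, d\mu = -\int f \, Lg \, d\mu = \int \skp{\nabla f}{\nabla g} \, d\mu.
\end{equation*}

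Since $\Hess H$ is pointwise positive definite, a weighted Cauchy--Schwarz inequality bounds the right-hand side by
\begin{equation*}
\left(\int \skp{\nabla f}{(\Hess H)^{-1} \nabla f}\, d\mu\right)^{1/2} \left(\int \skp{\nabla g}{\Hess H \, \nabla g}\, d\mu\right)^{1/2}.
\end{equation*}
I control the second factor via the commutation identity $\nabla L g = L \nabla g - \Hess H \cdot \nabla g$, which after two integrations by parts yields the integrated Bochner formula
\begin{equation*}
\int (Lg)^2 \, d\mu = \int |\Hess g|^2 \, d\mu + \int \skp{\nabla g}{\Hess H \, \nabla g}\, d\mu.
\end{equation*}
Dropping the non-negative Hessian term and using $Lg = -f$ gives $\int \skp{\nabla g}{\Hess H \, \nabla g}\, d\mu \leq \var_\mu(f)$. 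Substituting back into the Cauchy--Schwarz estimate, squaring, and dividing through by $\var_\mu(f)$ (the degenerate case being trivial) produces the claim.

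The principal obstacle is analytic rather than algebraic: one must establish existence of a sufficiently smooth and well-decaying solution $g$ to the Poisson equation, and rigorously justify the integrations by parts leading to the Bochner identity. For smooth strictly convex $H$ and smooth $f$ of moderate growth this is standard and can be handled by truncation and density arguments. Brascamp and Lieb's original proof circumvents these issues by reducing the inequality to the Pr\'ekopa--Leindler inequality; the $\Gamma_2$ argument above, however, is more flexible and generalizes more naturally to the weighted and conditional covariance variants that presumably motivate the present paper.
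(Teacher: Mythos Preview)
Your argument is correct: the representation $\var_\mu(f)=\int\skp{\nabla f}{\nabla g}\,d\mu$ via the Poisson equation, the weighted Cauchy--Schwarz, and the integrated Bochner identity $\int(Lg)^2\,d\mu=\int|\Hess g|^2\,d\mu+\int\skp{\nabla g}{\Hess H\,\nabla g}\,d\mu$ combine exactly as you describe, and the analytic caveats you mention (strict versus uniform convexity, justification of the integrations by parts) are the standard ones.

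Regarding the comparison: the paper does not actually supply a proof of this theorem; it is quoted as the classical result of Brascamp and Lieb, with a pointer to Helffer's alternative argument based on the covariance representation~\eqref{e_representation_covariance}. Your proof \emph{is} that Helffer argument, and it is precisely the scalar (summed) version of the computations the paper carries out for its own main result, Theorem~\ref{p_main_proposition}. Concretely, the paper's key identity~\eqref{e_core_identity},
\[
\int \partial_j\varphi\,\partial_j f\,d\mu=\int\sum_{k}\bigl(|\partial_j\partial_k\varphi|^2+\partial_j\varphi\,\partial_j\partial_k H\,\partial_k\varphi\bigr)\,d\mu,
\]
summed over $j$, is exactly your Bochner formula for $\int(L\varphi)^2\,d\mu$, since $\int\nabla\varphi\cdot\nabla f\,d\mu=\var_\mu(f)=\int(L\varphi)^2\,d\mu$. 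What the paper does beyond this is to \emph{refrain from summing}: keeping the index $j$ free and replacing the full Hessian by the matrix $A$ of single-site Poincar\'e constants and interaction bounds yields the coordinate-wise (``directional'') estimate that is the point of the article. So your proof and the paper's machinery are not merely compatible---they are the same calculation, with the paper extracting a finer, block-resolved inequality before collapsing everything into a single scalar bound.
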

The main difference between the BLI  and our estimate is that
\begin{itemize}
  \item our estimate applies to covariances,
  \item it also handles non-convex Hamiltonians,
  \item in the convex case the bound is slightly weaker than in the BLI. 
\end{itemize}
The covariance estimate of Theorem~\ref{p_decay_of_correlations_linear} can be interpreted in the following way: The correlations of a non-convex perturbed Gibbs measure are dominated by the correlations of an suitable chosen Gaussian measure with ferromagnetic interaction. The proof of Theorem~\ref{p_decay_of_correlations_linear} is given in Section~\ref{s_decay_of_correlations_linear} and is based on a new type of functional inequality which we call \emph{directional Poincar\'e inequality}  (see~Theorem~\ref{p_main_proposition} below). The proof the directional Poincar\'e inequality (PI) is based on ideas which were outlined by Ledoux for the proof of the weighted covariance estimate (cf.~\cite{L} and Theorem~\ref{p_weighted_covariance_estimate}). 
\medskip

The use of the new covariance estimate is illustrated in Section~\ref{s_decay_of_correlations}, where we show how the estimate can be used to deduce decay of correlations of certain lattice systems of continuous spins. We distinguish two cases:\medskip

In Section~\ref{s_sub_exponential_decay_corr} we consider exponential decay of correlations. We show that the new covariance estimate yields a well-known weighted covariance estimate due to Helffer (see Theorem~\ref{p_weighted_covariance_estimate}, \cite[Section 4]{He2} or \cite[Proposition 2.1 or 3.1]{L}). This weighted covariance estimate is the central ingredient in a common method to deduce exponential decay of correlations for unbounded spin systems with a non-convex single-site potential and a weak finite-range interaction (see \cite[Theorem 2.1]{He2}, \cite[Theorem 1.1]{B-H1}, \cite[Theorem 3.1]{B-H2} or \cite[Proposition 6.2]{L}). Additionally, we show how Theorem~\ref{p_decay_of_correlations_linear} directly yields an exponential decay of correlations in this situation without relying on Theorem~\ref{p_weighted_covariance_estimate} (see Corollary \ref{p_decay_of_correlations_example} and Proposition \ref{p_application_of_decay_of_covariances}).\medskip

In Section~\ref{s_sub_algebraic_decay_corr} we consider algebraic decay of correlations. Using the new Brascamp-Lieb type covariance estimate, we give a criterion to deduce algebraic decay of correlations of lattice systems of continuous spins (see Proposition~\ref{p_algebraic_decay_correlations}). \medskip

The main result of this article (i.e.~Theorem~\ref{p_decay_of_correlations_linear}) was successfully applied in other articles of the author: Because there is a deep connection between decay of correlations and the validity of certain functional inequalities like the logarithmic Sobolev inequality (LSI) or the PI (see for example \cite{Ze1,Zeg96,He2,B-H1,Yos99,Yos01} or \cite{B-H1} for an overview), it is not surprising that Theorem~\ref{p_decay_of_correlations_linear} is one of the key ingredients to derive the LSI for the canonical ensemble $\mu_{N,m}$ in the case of a weak two-body interaction\cite{Menz11}. Additionally, Proposition~\ref{p_algebraic_decay_correlations} was used in~\cite{OR_rev} to refine the Otto-Reznikoff approach to the LSI.\medskip

We conclude the introduction by making a comment on the origin of the
content of this article. Most of the material of this article is
contained in the dissertation\cite{Diss} of the author but
unpublished until now. The proof of the Brascamp-Lieb type covariance estimate of Theorem~\ref{p_decay_of_correlations_linear} emerged out of joint discussions with Felix Otto.

 \medskip

\section{The Brascamp-Lieb type covariance estimate and its proof.} \label{s_decay_of_correlations_linear}

We consider a finite dimensional Euclidean space $X$. Norms $|\cdot|$ and gradients $\nabla$ are derived from the Euclidean structure. If a probability measure $\mu$ on $X$ satisfies the PI, we directly obtain the following standard covariance estimate:
 \begin{lemma}\label{p_poincare_covariance}
   Assume $\mu$ satisfies PI with constant $\varrho$. Then for any smooth function $f$ and $g$ we have 
   \begin{align}
     | \cov_{\mu}(f,g) | \leq \frac{1}{\varrho} \left( \int |\nabla f|^2 \ d \mu \right)^{\h} \left( \int |\nabla g|^2 \ d \mu \right)^{\h}.  \label{e_basic_covariance_estimate}
   \end{align}
 \end{lemma}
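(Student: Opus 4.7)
The plan is to combine the Cauchy--Schwarz inequality for covariances with the Poincar\'e inequality applied separately to $f$ and to $g$.

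First I would recall that for any two square-integrable functions, the Cauchy--Schwarz inequality (applied to the centered functions $f - \int f \, d\mu$ and $g - \int g \, d\mu$ in $L^2(\mu)$) yields
\begin{equation*}
  |\cov_\mu(f,g)| \leq \left( \var_\mu(f) \right)^{1/2} \left( \var_\mu(g) \right)^{1/2}.
\end{equation*}
Next I would invoke the assumed Poincar\'e inequality with constant $\varrho$, namely $\var_\mu(h) \leq \varrho^{-1} \int |\nabla h|^2 \, d\mu$ for smooth $h$, and apply it to $h = f$ and to $h = g$ respectively. Plugging the two resulting bounds into the Cauchy--Schwarz estimate and taking square roots gives
\begin{equation*}
  |\cov_\mu(f,g)| \leq \left( \frac{1}{\varrho} \int |\nabla f|^2 \, d\mu \right)^{1/2} \left( \frac{1}{\varrho} \int |\nabla g|^2 \, d\mu \right)^{1/2} = \frac{1}{\varrho} \left( \int |\nabla f|^2 \, d\mu \right)^{1/2} \left( \int |\nabla g|^2 \, d\mu \right)^{1/2},
\end{equation*}
which is exactly \eqref{e_basic_covariance_estimate}.

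There is essentially no obstacle here: the argument is a two-line combination of a well-known functional-analytic fact (Cauchy--Schwarz on $L^2(\mu)$ for centered functions) and the hypothesis. The only mild caveat is that one must ensure the expressions are finite, which is automatic under the smoothness assumptions and the fact that the Poincar\'e inequality implies finite variance whenever the Dirichlet form $\int |\nabla h|^2 \, d\mu$ is finite; this covers the generic setting the paper works in.
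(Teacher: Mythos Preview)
Your argument is correct and is precisely the standard two-line derivation the paper has in mind: the lemma is stated there without proof, introduced merely as the ``standard covariance estimate'' one ``directly obtain[s]'' from the PI. The Cauchy--Schwarz bound $|\cov_\mu(f,g)|\le \var_\mu(f)^{1/2}\var_\mu(g)^{1/2}$ followed by the Poincar\'e inequality on each factor is exactly the intended route, so there is nothing to add or compare.
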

Even if the estimate \eqref{e_basic_covariance_estimate} is optimal (cf.~\cite[Remark~4]{OR07}), it does not yield information about the dependence of the covariance on the specific coordinates. Hence, the estimate~\eqref{e_basic_covariance_estimate} is useless for deducing decay of covariances. For example, let us consider a Gaussian Gibbs measure 
\begin{equation*}
  \mu (dx) = \frac{1}{Z} \exp \left( - x \cdot Ax \right) \ dx
 \end{equation*} 
on $\R^N$ with a symmetric and positive definite $N \times N$- Matrix $A$. Then it is known that
\begin{equation}\label{e_gaussian_covariance}
 \cov_\mu (x_n,x_k) = \left(A^{-1} \right)_{nk} \leq \frac{1}{\varrho}.
\end{equation}
Therefore, we can hope for a finer estimate than~\eqref{e_basic_covariance_estimate} that is also sensitive to the dependence of the functions $f$ and $g$ on the specific coordinates $x_i$. Our covariance estimate shows this feature:
 \begin{assumption}
   \label{p_assumption_on_growth}
   We assume that the Hamiltonian $H$ of the Gibbs measure $\mu$ is convex at infinity i.e. $H$ is a bounded perturbation of a convex function. It follows from the observation by Bobkov \cite{Bob} -- all log-concave measures satisfy PI -- and the perturbation lemma of Holley-Stroock~\cite{HS} (cf.~Theorem~\ref{local:thm:HolleyStroock}) that $\mu$ satisfies PI with a unspecified constant $\tilde \varrho>0$.
\end{assumption}

\begin{theorem}[Covariance estimate, Otto \& Menz] \label{p_decay_of_correlations_linear} 
  We consider a probability measure $d\mu:= Z^{-1} \exp (-H(x)) \ dx$ on a direct product of Euclidean spaces $X= X_1 \times \cdots \times X_N$. We assume that
\begin{itemize}
\item the conditional measures $\mu(dx_i | \bar x_i )$, $1\leq i \leq N$, satisfy a uniform PI with constant $\varrho_i>0$ which means that for all smooth functions $f: X_i \to \mathbb{R}$ 
\begin{equation*}
   \var_{\mu} (f): = \int \left(f - \int f d \mu \right)^2 d \mu \leq \frac{1}{\varrho_i} \int |\nabla f|^2 d\mu
\end{equation*}
uniformly in~$\bar x_i$.
\item the numbers $\kappa_{ij}$, $1 \leq i \neq j \leq N$, satisfy
   \begin{equation*}
|\nabla_i \nabla_j H(x)|\leq \kappa_{ij}  < \infty     
   \end{equation*}
uniformly in $x \in X$. Here, $|\cdot|$ denotes the operator norm of a bilinear form. 
 \item the symmetric matrix $A=(A_{ij})_{N \times N}$ defined by
  \begin{equation} \label{e_definition_of_A}
A_{ij} =
\begin{cases}
  \varrho_i, & \mbox{if }\;  i=j , \\
  -\kappa_{ij}, & \mbox{if } \; i< j,
\end{cases}
  \end{equation}
is positive definite. 
\end{itemize}
 Then for all smooth functions $f$ and $g$ 
   \begin{equation}
     \label{e_covariance estimate}
      | \cov_{\mu}(f,g) | \leq  \sum_{i,j=1}^N \left( A^{-1} \right)_{ij} \ \left( \int |\nabla_i f|^2 \ d \mu \right)^{\h} \left( \int |\nabla_j g|^2 \ d \mu \right)^{\h}.
   \end{equation}
\end{theorem}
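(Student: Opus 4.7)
The plan is to use the Helffer-Sjöstrand representation of the covariance and reduce the problem to a directional Poincar\'e inequality. Because $\mu$ satisfies PI by Assumption~\ref{p_assumption_on_growth}, the generator $L := \Delta - \nabla H \cdot \nabla$ has a bounded inverse on mean-zero functions, so I set $\phi := L^{-1}(g - \int g\, d\mu)$ and write $\Phi := \nabla \phi = (\Phi_1, \ldots, \Phi_N)$ with $\Phi_i : X \to X_i$. Integration by parts for the scalar $L$ gives $\cov_\mu(f,g) = -\int \nabla f \cdot \Phi\, d\mu$, while differentiating $L\phi = g - \int g\, d\mu$ in the $i$-th block of variables produces the Witten $1$-form equation $L\Phi_i - \sum_{j=1}^N (\nabla_i \nabla_j H)\,\Phi_j = \nabla_i g$. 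Testing against $\Phi_i$ and integrating against $\mu$ yields
\[
  \int |\nabla \Phi_i|^2\, d\mu + \sum_{j=1}^N \int \Phi_i^T (\nabla_i \nabla_j H)\, \Phi_j\, d\mu = -\int \Phi_i \cdot \nabla_i g\, d\mu.
\]

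The heart of the argument is the \emph{directional} PI
\[
  \varrho_i \int |\Phi_i|^2\, d\mu \leq \int |\nabla_i \Phi_i|^2\, d\mu + \int \Phi_i^T (\nabla_i \nabla_i H)\,\Phi_i\, d\mu,
\]
which exploits that $\Phi_i = \nabla_i \phi$ is a gradient in the $i$-th block. For fixed $\bar x_i$, the conditional measure $\mu_i := \mu(dx_i | \bar x_i)$ has generator $L_i$ and satisfies PI with constant $\varrho_i$, so its dual form reads $\varrho_i \int |\nabla_i \psi|^2\, d\mu_i \leq \int (L_i \psi)^2\, d\mu_i$. A direct integration by parts in $x_i$ rewrites the right-hand side as $\int |\nabla_i \nabla_i \psi|^2\, d\mu_i + \int (\nabla_i \psi)^T (\nabla_i \nabla_i H)(\nabla_i \psi)\, d\mu_i$; taking $\psi := \phi(\cdot, \bar x_i)$ and integrating out $\bar x_i$ gives the displayed inequality. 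This is the step I expect to be the main obstacle, since the rest of the proof is essentially linear algebra.

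Plugging the directional PI into the previous identity, discarding the non-negative cross terms $\sum_{k\neq i}\int |\nabla_k \Phi_i|^2\, d\mu$, and bounding each off-diagonal Hessian contribution by $|\int \Phi_i^T(\nabla_i\nabla_j H)\Phi_j\, d\mu| \leq \kappa_{ij}\,(\int |\Phi_i|^2\, d\mu)^{1/2}(\int |\Phi_j|^2\, d\mu)^{1/2}$ via Cauchy-Schwarz and the operator-norm bound, one finds that the vector $v_i := (\int |\Phi_i|^2\, d\mu)^{1/2}$ satisfies $Av \leq w$ componentwise, with $w_i := (\int |\nabla_i g|^2\, d\mu)^{1/2}$. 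The matrix $A$ is a symmetric positive-definite matrix with non-positive off-diagonal entries, hence an $M$-matrix, so $A^{-1}$ has non-negative entries and $v \leq A^{-1}w$ componentwise. A final Cauchy-Schwarz $|\cov_\mu(f,g)| \leq \sum_i (\int |\nabla_i f|^2\, d\mu)^{1/2}\,v_i$ then produces the claimed estimate~\eqref{e_covariance estimate}.
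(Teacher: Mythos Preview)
Your argument is correct and follows essentially the same route as the paper: the Helffer--Sj\"ostrand representation of the covariance, the Bochner-type identity for $\int \nabla_i\phi\cdot\nabla_i g\,d\mu$ (which you obtain by differentiating $L\phi=g-\int g\,d\mu$ and testing, and the paper obtains by direct integration by parts as identity~\eqref{e_core_identity}), the conditional Poincar\'e inequality in its $\Gamma_2$ form~\eqref{e_e_ss_sg}, and finally the $M$-matrix fact that $A^{-1}$ is entrywise nonnegative (the paper cites \cite[Lemma~9]{OR07}). The only cosmetic difference is that you solve for $\phi$ in terms of $g$ rather than $f$, which is harmless by the symmetry of $A^{-1}$.
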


 The structure of the estimate in Theorem \ref{p_decay_of_correlations_linear} is related to the BLI in the sense that variance is replaced by covariance and that $\Hess H$ is replaced by $A$. 
\begin{remark}[Connection to BLI] \label{p_connection_to_BLI}  We assume $X_i = \R$ for $i \in \{1, \ldots, N \}$ and let $A$ be a symmetric positive definite $N \times N$- matrix. We consider a ferromagnetic Gaussian  Hamiltonian given by
\begin{equation*}
 H(x)  = \frac{1}{2} \sum_{1\leq i,j \leq N } x_i A_{ij} x_j \ + \sum_{1 \leq i \leq N} b_i x_i, \qquad A_{ij}, b_j \in \mathbb{R},
  \end{equation*} 
where ferromagnetic means that the coupling is attractive i.e. 
\[
 A_{ij}=A_{ji} \leq 0 \quad \mbox{for } i< j \in \{1, \ldots, N  \}. 
\]

Then the covariance estimate \eqref{e_covariance estimate} coincides with the BLI given by~\eqref{e_brascamp_lieb} provided the function $f=g$ is an affine function.
\end{remark}
The next remark considers the optimality of Theorem \ref{p_decay_of_correlations_linear}.
\begin{remark}[Optimality]\label{r_optimality}  Provided the Hamiltonian H is ferromagnetic Gaussian, the estimate of Theorem \ref{p_decay_of_correlations_linear} is optimal. This remark is verified by setting $f(x_n)=x_n$ and $g(x_k)=x_k$ and using \eqref{e_gaussian_covariance}.
\end{remark}

\begin{remark}[Criterion for PI]\label{p_OR_for_SG} Theorem \ref{p_decay_of_correlations_linear} contains a well-known criterion for PI i.e.  
If $A \geq \varrho \Id$, $\varrho>0$, then~$\mu$ satisfies a PI with constant~$\varrho$, which means that for all smooth functions $f$
\begin{equation*}
   \var_{\mu} (f): = \int \left(f - \int f d \mu \right)^2 d \mu \leq \frac{1}{\varrho} \int |\nabla f|^2 d\mu. \tag{PI}
\end{equation*}

\begin{equation*}
    A \geq \varrho \Id, \quad \varrho>0  \qquad \Rightarrow \qquad \mu \; \mbox{satisfies PI with constant }\varrho.
  \end{equation*}
\end{remark}

The assumption under which Theorem \ref{p_decay_of_correlations_linear} holds has the same algebraic structure as the assumption in the Otto-Reznikoff criterion for LSI (cf.~\cite[Theorem 1]{OR07}). The only difference is that the uniform LSI constant for the single-site conditional measures is replaced by the uniform PI constant.  \smallskip

Starting point of the proof of Theorem \ref{p_decay_of_correlations_linear} is a representation of the covariance, which was used by Helffer \cite{He6} to give another proof of the BLI. More precisely, one can express the covariance of the measure $\mu$ as   
\begin{equation}
  \label{e_representation_covariance}
  \cov_{\mu}(f,g) = \int \nabla \varphi \cdot \nabla g \ d \mu,
\end{equation}
where the potential $\varphi$ is defined as the solution of the elliptic equation 
\begin{equation}
  \label{e_definition_of_varphi}
 - \nabla \cdot \left( \mu  \nabla \varphi\right) = \left(f - \int f \ d \mu \right)  \mu . 
\end{equation}
Here we used the convention, that $\mu$ also denotes the Lebesgue density of the probability measure $\mu$. As a solution of \eqref{e_definition_of_varphi} we understand any $\varphi\in H^1(\mu)$ such that for all $\zeta \in H^1(\mu)$
\begin{equation}
  \label{e_sense_of_solution}
  \int \nabla \zeta \cdot \nabla \varphi \ d \mu = \int \zeta \left(f - \int f \ d\mu \right) \ d \mu. 
\end{equation}
The existence of such solutions follows directly from the Riesz representation theorem applied to 
\begin{equation} \label{e_Hilbert_H}
\mathcal{H} = H^1(\mu) \cap \left\{\varphi , \ \int \varphi d \mu =0  \right\}  
\end{equation}
equipped with the inner product 
\begin{equation}
  \label{e_inner_product_on_H}
 \int \nabla \zeta \cdot \nabla \varphi \ d \mu. 
\end{equation}
The completeness of $\mathcal{H}$ w.r.t.~the chosen inner product follows from the fact that $\mu$ satisfies some PI, which is guaranteed by our Assumption~\ref{p_assumption_on_growth}.\medskip

Let us return to the proof of Theorem \ref{p_decay_of_correlations_linear}. An application of the Cauchy-Schwarz inequality to \eqref{e_representation_covariance} yields
\begin{equation}
  \label{e_cauchy_schwarz_covariance}
  \left| \cov_{\mu}(f,g) \right| \leq \sum_{i=1}^N \left( \int |\nabla_i \varphi |^2 d \mu \right)^\h \left( \int |\nabla_i g |^2 d \mu \right)^\h  .
\end{equation}
Now, an application of the following theorem yields the desired estimate~\eqref{e_covariance estimate} and completes the proof of Theorem~\ref{p_decay_of_correlations_linear}.


\begin{theorem}[Directional PI] \label{p_main_proposition} 
 Assume that the conditions of Theorem \ref{p_decay_of_correlations_linear} are satisfied. 
For any function $f$ let the potential $\varphi$ be a solution of \eqref{e_definition_of_varphi}.  
 Then for all $ i \in \{ 1, \ldots, N \}$
\begin{equation}\label{e_main_proposition}
 \left( \int |\nabla_i \varphi|^2  d \mu \right)^{\frac{1}{2}}  \leq \sum_{j=1}^{N} \left(A^{-1} \right)_{ij} \left(\int |\nabla_j f|^2 d \mu \right)^{\frac{1}{2}}. 
\end{equation} 
\end{theorem}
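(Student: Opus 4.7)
The symmetric matrix $A$ has positive diagonal $A_{ii} = \varrho_i$ and non-positive off-diagonal entries $A_{ij} = -\kappa_{ij}$; together with positive definiteness this makes $A$ a Stieltjes matrix, so $A^{-1}$ has non-negative entries. Writing $u_i := (\int |\nabla_i \varphi|^2 \, d\mu)^{\h}$ and $v_j := (\int |\nabla_j f|^2 \, d\mu)^{\h}$, the target inequality \eqref{e_main_proposition} reads $u \leq A^{-1} v$ componentwise. Thanks to $A^{-1} \geq 0$, it suffices to prove the simpler componentwise system $Au \leq v$, namely
\[
\varrho_i u_i \leq v_i + \sum_{j \neq i} \kappa_{ij} u_j, \qquad i = 1, \ldots, N.
\]

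To derive this system I formally differentiate the elliptic equation \eqref{e_definition_of_varphi}, which may be rewritten as $L\varphi = f - \int f \, d\mu$ for $L = -\Delta + \nabla H \cdot \nabla$. The commutator identity $[\nabla_i, L] = \sum_k (\nabla_i \nabla_k H)\nabla_k$ yields
\[
L(\nabla_i \varphi) + \sum_k (\nabla_i \nabla_k H)(\nabla_k \varphi) = \nabla_i f.
\]
Pairing with $\nabla_i \varphi$ in $L^2(\mu)$ and integrating by parts produces
\[
\int |\nabla(\nabla_i \varphi)|^2 \, d\mu + \int (\nabla_i^2 H) |\nabla_i \varphi|^2 \, d\mu + \sum_{k\neq i} \int (\nabla_i \nabla_k H)(\nabla_k \varphi)(\nabla_i \varphi) \, d\mu = \int (\nabla_i \varphi)(\nabla_i f) \, d\mu.
\]
Cauchy-Schwarz together with $|\nabla_i \nabla_k H| \leq \kappa_{ik}$ controls the cross-Hessian sum by $u_i \sum_{k \neq i} \kappa_{ik} u_k$, and the right-hand side by $u_i v_i$.

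The remaining essential step is the directional lower bound
\[
\int |\nabla(\nabla_i \varphi)|^2 \, d\mu + \int (\nabla_i^2 H) |\nabla_i \varphi|^2 \, d\mu \geq \varrho_i u_i^2,
\]
from which the componentwise inequality follows by substitution and division by $u_i$. My plan for this is to combine the integration-by-parts identity
\[
\int |\nabla_i g|^2 \, d\mu + \int (\nabla_i^2 H)\, g^2 \, d\mu = \int \bigl( \nabla_i g - (\nabla_i H)\, g \bigr)^2 \, d\mu,
\]
applied to $g = \nabla_i \varphi$, with the conditional Poincar\'e inequality of constant $\varrho_i$ for $\mu(\cdot|\bar x_i)$ and the identity $\overline{\nabla_i \varphi}^{(i)} = \cov^{(i)}(\varphi, \nabla_i H)$, itself obtained by integration by parts in $x_i$.

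The chief obstacle is precisely this final lower bound. A direct application of the conditional PI to $\nabla_i \varphi$ leaves a correction term $\varrho_i \int |\overline{\nabla_i \varphi}^{(i)}|^2 \, d\bar\mu^{(i)}$ arising because $\nabla_i \varphi$ does not have vanishing conditional mean. This correction must be absorbed exactly into contributions coming from $\int (\nabla_i^2 H)|\nabla_i \varphi|^2 \, d\mu$ through a careful Ledoux-style cancellation --- which, as noted in the introduction, is the delicate heart of the proof.
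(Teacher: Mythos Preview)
Your overall strategy coincides with the paper's: derive the core identity by commuting $\nabla_i$ through $L$, bound the cross-Hessian terms by Cauchy--Schwarz to obtain the componentwise system $Au\le v$, and invert using $A^{-1}\ge 0$. Everything you write up to the ``chief obstacle'' is correct and matches the paper line for line.

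The gap is that you have not actually closed the argument, and the step you flag as ``the delicate heart of the proof'' is in fact immediate. The lower bound you need,
\[
\int |\nabla(\nabla_i\varphi)|^2\,d\mu + \int (\nabla_i^2 H)\,|\nabla_i\varphi|^2\,d\mu \;\ge\; \varrho_i \int |\nabla_i\varphi|^2\,d\mu,
\]
follows in two short moves. First, discard the non-negative summands $\int|\partial_k\partial_i\varphi|^2\,d\mu$ for $k\neq i$ from $|\nabla(\nabla_i\varphi)|^2=\sum_k|\partial_k\partial_i\varphi|^2$. Second, apply to $g=\partial_i\varphi$ the one-dimensional Poincar\'e inequality for the conditional measure $\mu(dx_i\mid\bar x_i)$ in its equivalent Bochner/Witten-Laplacian form,
\[
\int (\partial_i g)^2\,\mu(dx_i\mid\bar x_i) + \int (\partial_i^2 H)\,g^2\,\mu(dx_i\mid\bar x_i) \;\ge\; \varrho_i \int g^2\,\mu(dx_i\mid\bar x_i),
\]
and integrate in $\bar x_i$. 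This inequality is \emph{exactly} equivalent to $\PI(\varrho_i)$ for $\mu(dx_i\mid\bar x_i)$ and holds for \emph{every} $g\in H^1$, with no mean-zero hypothesis; it is the statement that the Witten Laplacian on $1$-forms has spectrum bounded below by the spectral gap (see e.g.\ \cite[Proposition~1.3,~(1.8)]{L} or \cite{HeS,He1}). Consequently your worry about the nonvanishing conditional mean of $\nabla_i\varphi$ is a red herring, and no further ``Ledoux-style cancellation'' or use of the identity $\overline{\nabla_i\varphi}^{(i)}=\cov^{(i)}(\varphi,\nabla_i H)$ is needed. This is precisely how the paper proceeds (its inequality~\eqref{e_e_ss_sg}); once you replace your proposed machinery by this one-line fact, your proof is complete and identical to the paper's.
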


Before we turn to the proof of Theorem~\ref{p_main_proposition}, let us explain why we call the estimate~\eqref{e_main_proposition} directional PI. For this let us recall the dual formulation of the PI (cf.~for example~\cite{OV}), which is an easy consequence of the dual characterization of the norm on the Hilbertspace~$\mathcal{H}$ given by~\eqref{e_Hilbert_H} and~\eqref{e_inner_product_on_H}.
\begin{lemma}[Dual formulation of the PI]
  A probability measure $\mu$ satisfies PI with constant $\varrho>0$ if and only if for any function $f$ and the solution $\varphi$ of \eqref{e_definition_of_varphi}
  \begin{equation}\label{e_weak_SG}
 \left( \int |\nabla \varphi|^2 \ d\mu \right)^{\frac{1}{2}}  \leq \frac{1}{\varrho} \left(\int |\nabla f|^2 d\mu \right)^{\frac{1}{2}}.
\end{equation}  
\end{lemma}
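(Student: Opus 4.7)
The plan is to establish both implications directly from the weak formulation (\ref{e_sense_of_solution}), in each case by testing against a well-chosen $\zeta$ and then applying Cauchy--Schwarz. The underlying picture is that the solution map $f \mapsto \varphi$ is the Riesz isomorphism identifying the dual of $\mathcal{H}$ (endowed with the Dirichlet inner product (\ref{e_inner_product_on_H})) with $\mathcal{H}$ itself; (\ref{e_weak_SG}) is simply the statement that this map has operator norm at most $1/\varrho$.

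For the forward direction I would assume PI with constant $\varrho$. Since $\varphi \in \mathcal{H}$, it is itself an admissible test function in (\ref{e_sense_of_solution}); plugging in $\zeta = \varphi$ yields
\begin{equation*}
\int |\nabla \varphi|^2 \, d\mu = \int \varphi \left( f - \int f \, d\mu \right) d\mu.
\end{equation*}
Cauchy--Schwarz bounds the right-hand side by $\left(\int \varphi^2 \, d\mu\right)^{\h} \var_\mu(f)^{\h}$, and PI applied to the mean-zero function $\varphi$ gives $\int \varphi^2 \, d\mu \leq \varrho^{-1} \int |\nabla \varphi|^2 \, d\mu$. Dividing through by $\left(\int |\nabla \varphi|^2 \, d\mu\right)^{\h}$ produces the intermediate estimate $\left(\int |\nabla \varphi|^2 \, d\mu\right)^{\h} \leq \varrho^{-\h}\, \var_\mu(f)^{\h}$. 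A second application of PI, this time to $f$, bounds $\var_\mu(f)^{\h} \leq \varrho^{-\h} \left(\int |\nabla f|^2 \, d\mu\right)^{\h}$, and combining the two factors of $\varrho^{-\h}$ yields (\ref{e_weak_SG}).

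For the converse I would assume (\ref{e_weak_SG}) holds for every $f$ and its associated $\varphi$. Since (\ref{e_definition_of_varphi}) depends on $f$ only through $f - \int f \, d\mu$, and since the right-hand side of (\ref{e_weak_SG}) is unaffected by the replacement $f \mapsto f - \int f \, d\mu$, I may assume without loss of generality that $f$ has mean zero and hence lies in $\mathcal{H}$. Testing (\ref{e_sense_of_solution}) with $\zeta = f$ then gives $\var_\mu(f) = \int \nabla f \cdot \nabla \varphi \, d\mu$, which by Cauchy--Schwarz and the hypothesis (\ref{e_weak_SG}) is bounded by $\varrho^{-1} \int |\nabla f|^2 \, d\mu$, i.e.\ PI with constant $\varrho$.

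No step poses a real obstacle: existence and uniqueness of $\varphi$ were already supplied by the Riesz representation argument preceding the lemma, and everything that remains is Cauchy--Schwarz combined with the two halves of the duality. The only point worth flagging is that the constant in (\ref{e_weak_SG}) is $1/\varrho$ rather than $1/\sqrt{\varrho}$: this is why the forward direction must invoke PI twice (once on the solution $\varphi$ and once on the source $f$), whereas the converse recovers PI with the full constant from a single application.
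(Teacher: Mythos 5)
Your proof is correct, and it is precisely the "dual characterization of the norm on $\mathcal{H}$" unpacked: taking $\zeta = \varphi$ in the weak formulation is testing the norm identity at its optimizer, and taking $\zeta = f$ in the converse is the reverse probe. The paper gives no detailed proof here (it only cites the duality), so your argument fills in exactly what was intended. One small note: in the converse, "mean zero and hence lies in $\mathcal{H}$" also requires $\int |\nabla f|^2\,d\mu < \infty$, but when that integral is infinite the PI is vacuous, so the reduction is harmless.
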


Note that the directional PI given by~\eqref{e_main_proposition} estimates each coordinate of the gradient of~$\varphi$ separately and therefore is a refinement of the dual formulation of the PI given by~\eqref{e_weak_SG}. As in~\cite[Section~3]{OV}, function $\varphi$ formally denotes the tangent vector at of the curve~$(1+\varepsilon f) \mu$ at~$\varepsilon=0$. Therefore, $\nabla \varphi$ can be interpreted as the infinitesimal optimal displacement transporting the measure $\mu$ into $(1+\varepsilon f) \mu$ (cf.~\cite[Section~5]{OV}). So, the left hand side of \eqref{e_main_proposition} measures the average flux of mass into the direction of the $i$-th coordinate against a weighted gradient of $f$. For this reason we call \eqref{e_main_proposition} directional PI. \medskip

 One can also interpret the estimate \eqref{e_main_proposition} in terms of the Witten complex (for a nice overview see~\cite{He_Book}). At least formally one can introduce the inverse Witten-Laplacian $A_1^{-1}$ as
\begin{equation*}
 A_1^{-1} \ \nabla f := \nabla \varphi,  
\end{equation*}
which maps the gradient of some function $f$ onto the gradient of the solution $\varphi$ of the equation \eqref{e_definition_of_varphi}. Let $\Pi_i$ denote the projection onto the space $X_i$, $i \in \{1, \ldots, N \} $. Then the estimate \eqref{e_main_proposition} becomes a weighted estimate of the $L^2$-operator norm of $ \Pi_i A_1^{-1}$. \medskip

Let us now turn to the proof of Theorem \ref{p_main_proposition}, which is the only missing ingredient in the proof of Theorem~\ref{p_decay_of_correlations_linear}. The argument is very basic. It combines the core inequality of Ledoux's argument for \cite[Proposition 3.1]{L} with linear algebra that was used in the argument of \cite[Theorem 1]{OR07}.

\begin{proof}[Proof of Theorem \ref{p_main_proposition}]
  To make the main ideas of the argument more visible, we assume that the Euclidean spaces $X_i$, $i \in \left\{ 1, \ldots, N \right\}$, are one dimensional i.e.~$X_i= \R$. The argument for general Euclidean spaces $X_i$ is almost the same. Then the product space $X=X_1 \times \cdots \times X_N$ becomes $\R^N$. The gradient $\nabla_i$ on $X_i$ is just the partial derivative  $\partial_i$ w.r.t.~the $i$-th coordinate. The first ingredient of the proof is the basic estimate for $j \in \left\{ 1, \ldots , N \right\}$
  \begin{equation}
    \label{e_e_ss_sg}
    \int \left( | \partial_j \partial_j  \varphi |^2 + \partial_j \varphi \ \partial_j \partial_j H  \ \partial_j \varphi \right) \mu (dx_j | \bar x_j)  \geq \varrho_j \int | \partial_j \varphi |^2 \mu (dx_j | \bar x_j),
  \end{equation}
which is just an equivalent formulation of the PI with constant $\varrho_j$ for the single-site measure $\mu (dx_j | \bar x_j)$ (cf.~\cite[Proposition~1.3, (1.8)]{L} or \cite{HeS,He1}). The second ingredient of the proof is the identity 
\begin{equation}
  \label{e_core_identity}
  \int \partial_j \varphi \ \partial_j f d \mu = \int \sum_{k=1}^N \left(|  \partial_j \partial_k \varphi |^2 + \partial_j \varphi \ \partial_j \partial_k H  \ \partial_k \varphi  \right) d \mu.
\end{equation}
Indeed, by partial integration one sees that
\begin{equation*}
   \int \partial_j \varphi \ \partial_j f d \mu = - \int \partial_j \partial_j \varphi \ \left( f- \int f d \mu \right)  d \mu + \int  \partial_j \varphi \ \partial_j H \left( f- \int f d \mu \right)  d \mu.
\end{equation*}
Applying now~\eqref{e_sense_of_solution} on the terms of the r.h.s.~yields the identity
\begin{align*}
   \int \partial_j \varphi \ \partial_j f \ d \mu & =  - \int \sum_{k=1}^N \partial_k \partial_j \partial_j \varphi \ \partial_k \varphi  \ d \mu +  \int \sum_{k=1}^N \partial_k \partial_j \varphi \ \partial_j H \ \partial_k \varphi  \ d \mu \\
   & \qquad + \int \sum_{k=1}^N  \partial_j \varphi \ \partial_k \partial_j H \ \partial_k \varphi   \ d \mu.
\end{align*}
Let us have a closer look at the second term on the r.h.s~of the last identity. It follows from the definition of $\mu$ that
\begin{align*}
  \int \sum_{k=1}^N \partial_k \partial_j \varphi \ \partial_j H \ \partial_k \varphi  \ d \mu &=  - \frac{1}{Z} \int \sum_{k=1}^N \partial_k \partial_j \varphi(x) \  \ \partial_k \varphi(x) \  \partial_j \exp\left(- H(x) \right) \ dx \\
  & =   \int \sum_{k=1}^N \partial_j \partial_k \partial_j \varphi \ \ \partial_k \varphi  \ d \mu  +   \int \sum_{k=1}^N \partial_k \partial_j \varphi \  \partial_j \partial_k \varphi  \ d \mu 
\end{align*}
A combination of the last two formulas yields the desired identity~\eqref{e_core_identity}. \smallskip

Now, we turn to the proof of~\eqref{e_main_proposition}. A combination of~\eqref{e_e_ss_sg} and~\eqref{e_core_identity} yields the estimate
\begin{align*}
  \int \partial_j \varphi \ \partial_j f \ d \mu & \geq \varrho_j \int | \partial_j \varphi |^2 d\mu  + \int \sum_{k=1, \ k \neq j}^N  \partial_j \varphi \ \partial_j \partial_k H  \ \partial_k \varphi  \ d \mu \\
  & \geq  \varrho_j \int | \partial_j \varphi |^2 d \mu  - \sum_{k=1, \ k \neq j}^N \kappa_{jk} \int  \partial_j \varphi \ \partial_k \varphi \  d \mu.
\end{align*}
Applying Cauchy-Schwarz on the last estimate yields for all $j \in \left\{ 1, \ldots, N \right\}$
\begin{align}
   \left( \int | \partial_j f | ^2 d \mu \right)^{\h} & \geq \varrho_j \left( \int | \partial_j \varphi |^2  d \mu \right)^{\h}  - \sum_{k=1, \ k \neq j}^N \kappa_{jk} \left(  \int  | \partial_k \varphi |^2 d \mu \right)^{\h} \notag \\
 & = \sum_{k=1}^N A_{jk} \left(  \int  | \partial_k \varphi |^2 d \mu \right)^{\h} .  \label{e_basic_estimate_cs}
\end{align}
A simple linear algebra argument outlined in \cite[Lemma~9]{OR07} shows that the elements of the inverse of $A$ are non negative i.e.~$\left( A^{-1} \right)_{ij} \geq 0$ for all $i, j \in \left\{ 1, \ldots, N \right\}$. Hence, \eqref{e_basic_estimate_cs} yields
\begin{align*}
  \sum_{j=1}^N \left( A^{-1} \right)_{ij} \left( \int | \partial_j f | ^2 d \mu \right)^{\h} & \geq   \sum_{j=1}^N \left( A^{-1} \right)_{ij}  \sum_{k=1}^N A_{jk} \left(  \int  | \partial_k \varphi |^2 d \mu \right)^{\h} \\
  & = \delta_{ik} \left(  \int  | \partial_k \varphi |^2 d \mu \right)^{\h}  = \left(  \int  | \partial_i \varphi |^2 d \mu \right)^{\h}.
\end{align*}
\end{proof}
The proof of Theorem \ref{p_decay_of_correlations_linear} is just a direct application of Theorem \ref{p_main_proposition}.
\begin{proof}[Proof of Theorem \ref{p_decay_of_correlations_linear}]
Using the definition of $\varphi$, cf.~\eqref{e_definition_of_varphi}, we obtain the following estimate of the covariance
\begin{align}
  \cov_{\mu} \left( f,g \right) & = \int f \left( g - \int g \ \mu \right) d \mu \notag \\
  & = \int \nabla \varphi \cdot \nabla g \ d \mu \notag \\
  & \leq \sum_{j=1}^N \left(\int |\nabla_j \varphi|^2 d \mu \right)^{\frac{1}{2}} \left(\int |\nabla_j g|^2 d \mu \right)^{\frac{1}{2}} \notag 
\end{align}
Now, the statement follows directly from Theorem \ref{p_main_proposition}.  
\end{proof}

\section{Application of the B-L type covariance estimate: Decay of correlations}\label{s_decay_of_correlations}
In this section we show how Theorem~\ref{p_decay_of_correlations_linear} can be used to deduce decay of correlations. We distinguish between two cases:
\begin{itemize}
\item exponential decay of correlations (see Section~\ref{s_sub_exponential_decay_corr})
\item and algebraic decay of correlations (see Section~\ref{s_sub_algebraic_decay_corr}).
\end{itemize}

\subsection{Exponential decay of correlations.}\label{s_sub_exponential_decay_corr}
 
We start with reflecting a method based on Helffer~\cite{He2} that has often been used to derive exponential decay of correlations of spin systems with finite-range interaction or exponentially decaying (cf.~\cite{B-H1} and~\cite{B-H2}). This method is based on a weighted covariance estimate, which we present in the spirit of Ledoux \cite[Proposition 3.1]{L}, but rephrase the estimate in our framework.
\begin{theorem}[Helffer, Ledoux]\label{p_weighted_covariance_estimate}
   We assume that the conditions of Theorem \ref{p_decay_of_correlations_linear} are satisfied. Additionally, we consider positive weights $d_i > 0$, $i \in \{1, \ldots N \}$. Let the diagonal $N \times N$- matrix $D$ be defined as
   \begin{equation*}
  D := \diag (d_1 \ldots, d_N).   
   \end{equation*}
  We assume that there exists $\varrho>0$ such that in the sense of quadratic forms
\begin{equation}
  \label{e_positivity_of_DAD_inverse}
   D A D^{-1} \geq \varrho \Id.  
\end{equation}
Then the matrix $A$ is positive definite and for all functions $f$ and $g$, 
   \begin{equation}
     \label{e_weighted_covariance_estiamte}
       \cov_{\mu}(f,g)  \leq  \frac{1}{ \varrho}  \left( \int |D \nabla f|^2 \ d \mu \right)^{\h} \left( \int |D^{-1}\nabla g|^2 \ d \mu \right)^{\h}.
   \end{equation}
\end{theorem}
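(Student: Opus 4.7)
My plan is to reduce the weighted estimate to the unweighted covariance estimate of Theorem~\ref{p_decay_of_correlations_linear} via an elementary linear-algebraic manipulation, so I first need to verify that $A$ itself is positive definite. Since $A$ is symmetric it has real eigenvalues, and for any eigenpair $(\lambda,v)$ of $A$ the pair $(\lambda,Dv)$ is an eigenpair of the similar matrix $DAD^{-1}$; plugging $x=Dv$ into the hypothesis $\langle x, DAD^{-1}x\rangle \geq \varrho |x|^2$ yields $\lambda \geq \varrho > 0$, so $A$ is positive definite. This unlocks Theorem~\ref{p_decay_of_correlations_linear}, which gives
\[
|\cov_\mu(f,g)| \leq \sum_{i,j=1}^N (A^{-1})_{ij}\, \beta_i\, \gamma_j,
\]
where I abbreviate $\beta_i := (\int |\nabla_i f|^2 d\mu)^{1/2}$ and $\gamma_j := (\int |\nabla_j g|^2 d\mu)^{1/2}$.

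Next I would read the right-hand side as a Euclidean inner product on $\R^N$, insert the identity $DD^{-1}=\Id$, and apply Cauchy--Schwarz:
\[
\sum_{i,j}(A^{-1})_{ij}\beta_i \gamma_j = \langle \beta, A^{-1}\gamma\rangle = \langle D\beta,\, D^{-1}A^{-1}D\, (D^{-1}\gamma)\rangle \leq \|D^{-1}A^{-1}D\|_{\mathrm{op}}\, |D\beta|\, |D^{-1}\gamma|.
\]
Since $|D\beta|^2 = \int |D\nabla f|^2\, d\mu$ and $|D^{-1}\gamma|^2 = \int |D^{-1}\nabla g|^2\, d\mu$, the desired bound~\eqref{e_weighted_covariance_estiamte} reduces to the operator-norm estimate $\|D^{-1}A^{-1}D\|_{\mathrm{op}} \leq 1/\varrho$.

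Finally I would observe $D^{-1}A^{-1}D = (D^{-1}AD)^{-1}$ and that, since $A$ and $D$ are symmetric, $D^{-1}AD = (DAD^{-1})^T$; the quadratic forms of a matrix and its transpose coincide, so $\langle x, D^{-1}ADx\rangle = \langle x, DAD^{-1}x\rangle \geq \varrho|x|^2$ by hypothesis. The elementary fact that $\langle x, Mx\rangle \geq \varrho|x|^2$ implies $|Mx| \geq \varrho|x|$ (by Cauchy--Schwarz, requiring no symmetry of $M$) then yields $\|(D^{-1}AD)^{-1}\|_{\mathrm{op}} \leq 1/\varrho$, completing the proof. I do not expect a serious obstacle here; the only mildly subtle point is that $DAD^{-1}$ is not symmetric, so one cannot argue by diagonalization, but the quadratic-form coercivity is exactly what is needed to bound its inverse in operator norm.
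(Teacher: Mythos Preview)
Your proof is correct and follows essentially the same route as the paper: deduce positive definiteness of $A$ from the hypothesis applied to $Dv$ for an eigenvector $v$, invoke Theorem~\ref{p_decay_of_correlations_linear}, rewrite the resulting bilinear form with the weights inserted, and reduce to an operator-norm bound on the conjugated inverse, which follows from the coercivity $\langle x, DAD^{-1}x\rangle \geq \varrho|x|^2$ via Cauchy--Schwarz. The only cosmetic difference is that the paper works with $DA^{-1}D^{-1}$ whereas you work with its transpose $D^{-1}A^{-1}D$ (and accordingly pass through $D^{-1}AD = (DAD^{-1})^T$), but the arguments are otherwise identical.
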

At the end of this section, we give a new proof of Theorem~\ref{p_weighted_covariance_estimate} showing that the weighted covariance estimate~\eqref{e_weighted_covariance_estiamte} is an easy consequence of our covariance estimate of Theorem~\ref{p_decay_of_correlations_linear}. This shows that the statement of Theorem~\ref{p_decay_of_correlations_linear} is consistent with the existing literature.  
\begin{remark}\label{p_OR_for_SG_relaxed}
 Using a direct argument for deducing of Theorem \ref{p_weighted_covariance_estimate}, one sees that the condition \eqref{e_positivity_of_DAD_inverse} can be relaxed to a weaker condition (for the argument we refer the reader to~\cite[Section~1.2.1]{Diss} or~\cite[Proposition 3.2]{MFC}). More precisely, let the symmetric $N \times N $-matrix $\mathcal{A}(x)=(\mathcal{A}_{ij}(x)) $ be defined by
\begin{equation}\label{e_definition_of_mathcal_A}
\mathcal{A}_{ij} (x) =
\begin{cases}
  \varrho_i, & \mbox{if } \; i=j , \\
  \nabla_i \nabla_j H(x) , & \mbox{if } \; i < j.
\end{cases}
  \end{equation}
 Assume that there is $\varrho>0$ such that for all $x \in X$ 
\begin{equation}
  \label{e_positivity_of_D_mathcal_A_D_inverse}
   D \mathcal{A}(x) D^{-1} \geq \varrho \Id.  
\end{equation}
\end{remark}

Now, let us explain how the weighted covariance estimate of Theorem~\ref{p_weighted_covariance_estimate} can be used to deduce exponential decay of correlations. Let us consider a metric $\delta(\cdot,\cdot)$ on the set of sites $\{1, \ldots , N  \}$ of the spin system. For an arbitrary but fixed site $l \in \{1, \ldots , N  \}$ one chooses  
\begin{equation*}
 d_i:= \exp \left(- \delta(i,l) \right) 
\end{equation*}
as weights in Theorem \ref{p_weighted_covariance_estimate}. Because the triangle inequality implies
\begin{equation*}
 \frac{d_i}{d_j} = \exp \left(\delta(j,l) - \delta(i,l) \right) \leq \exp \left(\delta(j,i) \right), 
\end{equation*}
 a direct application of Theorem \ref{p_weighted_covariance_estimate} yields the following criterion for exponential decay of correlations.
 \begin{corollary}[Helffer \& Ledoux]\label{p_decay_of_correlations_example}
   Assume that the conditions of Theorem \ref{p_decay_of_correlations_linear} are satisfied. Additionally, we consider a metric $\delta(\cdot,\cdot)$ on the set $\{1, \ldots, N \}$ and the symmetric $N \times N$- matrix $\tilde A = (\tilde A_{ij})$ defined by
\begin{equation}\label{e_definition_of_tilde_A}
  \tilde A_{ij} =
\begin{cases}
  \varrho_i, & \mbox{if } \; i=j , \\
  - \exp \left( \delta(i,j) \right) \kappa_{ij}, & \mbox{if } \; i < j.
\end{cases}
  \end{equation}
   We assume that there exists $\tilde \varrho>0$ such that in the sense of quadratic forms
\begin{equation}
  \label{e_positivity_of_tilde_A}
   \tilde A \geq \tilde \varrho \Id.  
\end{equation}
Then for all functions $f=f(x_i)$ and $g=g(x_j)$, $i, j \in \{1, \ldots, N \}$, 
   \begin{equation}
     \label{e_covariance_decay}
      | \cov_{\mu}(f,g) | \leq  \frac{1}{\tilde \varrho} \ \exp \left(- \delta(i,j) \right)  \left( \int |\nabla_i f|^2 \ d \mu \right)^{\h} \left( \int |\nabla_j g |^2 \ d \mu \right)^{\h}.
   \end{equation}
\end{corollary}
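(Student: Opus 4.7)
The plan is to apply the weighted covariance estimate of Theorem~\ref{p_weighted_covariance_estimate} with weights tailored to the site $j$ on which $g$ depends. Set $l := j$, $d_k := \exp(-\delta(k,l))$ for $k \in \{1,\ldots,N\}$, and $D := \diag(d_1,\ldots,d_N)$. The triangle inequality then yields the key bound $d_p/d_q = \exp(\delta(q,l)-\delta(p,l)) \leq \exp(\delta(p,q))$, which will be used throughout.

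The main task is to verify the hypothesis $DAD^{-1} \geq \tilde\varrho \Id$ of Theorem~\ref{p_weighted_covariance_estimate}. Since a quadratic form depends only on the symmetric part, I work with $\hat A := \tfrac{1}{2}(DAD^{-1} + D^{-1}AD)$, whose diagonal coincides with $\varrho_p = \tilde A_{pp}$ and whose off-diagonal entries are $\hat A_{pq} = -\tfrac{1}{2}(d_p/d_q + d_q/d_p)\kappa_{pq}$. Applying the one-sided bound in both directions gives $\tfrac{1}{2}(d_p/d_q + d_q/d_p) \leq \exp(\delta(p,q))$, hence $\hat A_{pq} \geq \tilde A_{pq}$. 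Thus $\hat A$ and $\tilde A$ are both symmetric $\mathcal{Z}$-matrices (non-positive off-diagonals), and $\hat A - \tilde A$ has non-negative off-diagonal entries with vanishing diagonal. To upgrade this entrywise comparison to a quadratic-form inequality, I would invoke the Perron-Frobenius principle for $\mathcal{Z}$-matrices: writing any symmetric $\mathcal{Z}$-matrix as $cI - C$ with $C \geq 0$ entrywise and using $v^T C v \leq |v|^T C |v|$ shows that $\lambda_{\min}$ is attained at a non-negative unit vector. Combined with $v^T(\hat A - \tilde A) v \geq 0$ for $v \geq 0$, this yields $\lambda_{\min}(\hat A) \geq \lambda_{\min}(\tilde A) \geq \tilde\varrho$, i.e.~$DAD^{-1} \geq \tilde\varrho \Id$ as required.

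With the hypothesis in hand, Theorem~\ref{p_weighted_covariance_estimate} gives $|\cov_\mu(f,g)| \leq \tfrac{1}{\tilde\varrho}\bigl(\int|D\nabla f|^2 d\mu\bigr)^{1/2}\bigl(\int|D^{-1}\nabla g|^2 d\mu\bigr)^{1/2}$. Since $f$ depends only on $x_i$ and $g$ only on $x_j$, the weighted norms collapse to $d_i^2\int|\nabla_i f|^2 d\mu$ and $d_j^{-2}\int|\nabla_j g|^2 d\mu$; the choice $l = j$ then produces $d_j = 1$ and $d_i = \exp(-\delta(i,j))$, yielding the claimed exponential decay. The delicate step is the Perron-Frobenius upgrade, because entrywise inequalities do not in general imply quadratic-form inequalities for symmetric matrices; the $\mathcal{Z}$-matrix structure of both $\hat A$ and $\tilde A$ is essential here.
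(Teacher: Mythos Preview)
Your proof is correct and follows the route the paper sketches informally just before stating the corollary: choose exponential weights $d_k = \exp(-\delta(k,l))$ and apply Theorem~\ref{p_weighted_covariance_estimate}. The paper simply asserts that the triangle inequality makes this work, whereas you carefully verify the hypothesis $DAD^{-1} \geq \tilde\varrho\,\Id$ by passing to the symmetric part $\hat A$ and invoking the Perron--Frobenius fact that for a symmetric $\mathcal{Z}$-matrix the smallest eigenvalue is attained at a nonnegative vector. That step is genuinely needed, since the entrywise inequality $\hat A_{pq} \geq \tilde A_{pq}$ does not by itself imply $\hat A \geq \tilde A$ as quadratic forms; your argument handles this cleanly.

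The paper's labeled \emph{direct proof}, however, takes a different path: it bypasses Theorem~\ref{p_weighted_covariance_estimate} altogether, applies Theorem~\ref{p_decay_of_correlations_linear} to obtain $|\cov_\mu(f,g)| \leq (A^{-1})_{ij}\,(\int|\nabla_i f|^2\,d\mu)^{1/2}(\int|\nabla_j g|^2\,d\mu)^{1/2}$, and then estimates $(A^{-1})_{ij}$ via the Neumann (random-walk) expansion. Inserting and removing factors $e^{-\delta(\cdot,\cdot)}$ along each walk and using the triangle inequality termwise gives $(A^{-1})_{ij} \leq e^{-\delta(i,j)}(\tilde A^{-1})_{ij}$, after which $(\tilde A^{-1})_{ij} \leq 1/\tilde\varrho$ follows from $\tilde A \geq \tilde\varrho\,\Id$. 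Your route is slicker once Theorem~\ref{p_weighted_covariance_estimate} is available, while the Neumann-series argument is more self-contained (relying only on Theorem~\ref{p_decay_of_correlations_linear}) and adapts directly to the algebraic-decay setting of Section~\ref{s_sub_algebraic_decay_corr}.
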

This criterion may also be stated more generally for functions with arbitrary disjoint supports. It is implicitly contained in the prelude of~\cite[Proposition~6.2]{L}. \medskip

At the end of this section we will also give a direct proof of Corollary~\ref{p_decay_of_correlations_example}, which is just based on the covariance estimate of Theorem~\ref{p_decay_of_correlations_linear} and does not need the weighted covariance estimate of Theorem~\ref{p_weighted_covariance_estimate}. \medskip

Now, let us give an example how Corollary~\ref{p_decay_of_correlations_example} can be applied. For that purpose we consider a two-dimensional lattice system with non-convex single-site potential and weak nearest-neighbor interaction. The same type of argument would also work for any dimension and finite-range interaction. Let $X$ denote a two-dimensional periodic lattice of $N$-sites and let $\delta (\cdot,\cdot)$ denote the graph distance on it. We assume that $\mu \in \mathcal{P}(X)$ has the Hamiltonian 
\begin{equation}
  \label{e_example_hamiltonian}
  H(x) = \sum_{i} \psi (x_i) - \varepsilon \sum_{\delta (i,j)=1} x_i x_j,
\end{equation}
where the smooth potential $\psi$ is a bounded perturbation of a Gaussian in the sense that
\begin{equation*}
 \psi(x) = \frac{1}{2} x^2 + \delta \psi (x) \qquad \qquad \mbox{and} \qquad   \qquad \sup_{\R} |\delta \psi (x)| < \infty.  
\end{equation*}
By a combination of the Bakry-\'Emery criterion (cf.~Theorem~\ref{local:thm:BakryEmery}) and the of Holley-Stroock perturbation principle (cf.~Theorem~\ref{local:thm:HolleyStroock}) all conditional measures $\mu(dx_i| \bar x_i)$ satisfy a uniform LSI with constant $\Delta := \exp \left( - \osc \delta \psi  \right)$. From \eqref{e_example_hamiltonian} we see that 
\begin{equation*}
\kappa_{ij} = \sup_{x} |\nabla_i \nabla_j H(x) | = \varepsilon.  
\end{equation*}
Hence, we know that if the interaction is sufficiently weak in the sense of $\varepsilon < \frac{\Delta}{4}$, the matrix $A$ of Theorem \ref{p_decay_of_correlations_linear} satisfies 
\begin{equation*}
A \geq \left( \Delta - 4 \varepsilon \right) \Id.  
\end{equation*}
Analogously one obtains that if $\varepsilon < \frac{\Delta}{4} e^{-1}$, the matrix $\tilde A$ of Corollary \ref{p_decay_of_correlations_example} satisfies 
\begin{equation*}
\tilde A \geq \left( \Delta - 4 \varepsilon  e \right) \Id.  
\end{equation*}
Therefore, an application of Corollary \ref{p_decay_of_correlations_example} yields exponential decay of correlations:
\begin{proposition}\label{p_application_of_decay_of_covariances}
  Assume that $ \varepsilon < \frac{\Delta}{4} e^{-1}$. Then for any functions $f=f(x_i)$ and $g=g(x_j)$, $i, j \in \{1, \ldots, N \}$, 
  \begin{equation*}
 | \cov_{\mu}(f,g) | \leq  \frac{1}{\Delta - 4 \varepsilon  e  } \ \exp \left(- \delta(i,j) \right)  \left( \int |\nabla_i f|^2 \ d \mu \right)^{\h} \left( \int |\nabla_j g|^2 \ d \mu \right)^{\h}.   
  \end{equation*}
\end{proposition}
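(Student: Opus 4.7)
My plan is to apply Corollary~\ref{p_decay_of_correlations_example} with the graph distance $\delta$ on the lattice as the metric on sites. Three hypotheses must be checked: (a) a uniform PI for each single-site conditional measure $\mu(dx_i \mid \bar x_i)$, (b) the bound $|\nabla_i\nabla_j H|\le \kappa_{ij}$ with an explicit $\kappa_{ij}$, and (c) positivity of the weighted matrix $\tilde A$ defined in~\eqref{e_definition_of_tilde_A}. All three ingredients are essentially laid out in the paragraphs preceding the statement; the proof is the careful assembly of those pieces.

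For (a), the conditional Hamiltonian in the variable $x_i$ is $\psi(x_i)$ plus a term linear in $x_i$ coming from the nearest-neighbor interaction. The quadratic part $\frac12 x_i^2$ is $1$-convex, so Bakry--\'Emery (Theorem~\ref{local:thm:BakryEmery}) gives an LSI with constant $1$ for the associated Gaussian conditional measure; since $\delta\psi$ is bounded, Holley--Stroock (Theorem~\ref{local:thm:HolleyStroock}) upgrades this to an LSI, and hence a PI, with constant $\Delta = e^{-\osc\delta\psi}$, uniformly in $\bar x_i$. For (b), differentiating~\eqref{e_example_hamiltonian} twice yields $\nabla_i\nabla_j H = -\varepsilon$ when $\delta(i,j)=1$ and $0$ otherwise for $i\neq j$, so one may take $\kappa_{ij} = \varepsilon\,\setone_{\delta(i,j)=1}$.

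For (c), I would write $\tilde A = \Delta\,\Id - e\varepsilon\,B$, where $B$ is the adjacency matrix of the two-dimensional periodic nearest-neighbor graph: each diagonal entry of $\tilde A$ equals $\Delta$, and for $i<j$ the off-diagonal entry is $-e^{\delta(i,j)}\kappa_{ij}$, which equals $-e\varepsilon$ precisely for neighbors. Since every row of $B$ has exactly four $1$'s and $B$ is symmetric, Gershgorin's theorem gives $\|B\|_{\mathrm{op}}\le 4$, so $\tilde A \ge (\Delta - 4e\varepsilon)\Id$; under the standing assumption $\varepsilon < \Delta/(4e)$ this is strictly positive. Corollary~\ref{p_decay_of_correlations_example} then applies with $\tilde\varrho := \Delta - 4e\varepsilon$ and delivers the claimed inequality. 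There is no genuine obstacle here; the only step demanding a moment of care is (a), where one must observe that the interaction contributes only a linear, hence harmless, term to each conditional Hamiltonian before invoking Bakry--\'Emery on the quadratic part and Holley--Stroock on the bounded perturbation.
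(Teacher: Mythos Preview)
Your proof is correct and follows exactly the route the paper takes: the paragraphs preceding the proposition verify the single-site PI via Bakry--\'Emery plus Holley--Stroock, compute $\kappa_{ij}$, and bound $\tilde A$ below by $(\Delta-4\varepsilon e)\Id$ via the row-sum (Gershgorin) argument, after which Corollary~\ref{p_decay_of_correlations_example} gives the result. Your write-up is in fact slightly more explicit than the paper's, which simply asserts these verifications and invokes the corollary.
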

This statement reproduces the correlation bounds established by Helffer \cite{He2} and reproved by Ledoux in \cite[Proposition 6.2]{L}. \medskip

Let us now prove the statements mentioned in this section.

\begin{proof}[Proof of Theorem \ref{p_weighted_covariance_estimate} using Theorem \ref{p_decay_of_correlations_linear}]
 We start with deducing that $A$ is positive definite. Because $A$ is a symmetric Matrix, it suffices to show that every eigenvalue of $A$ is positive. Let $\lambda \in \R$ be an eigenvalue of $A$ with eigenvector $x$ i.e.~
  \begin{equation*}
    Ax= \lambda x.
  \end{equation*}
An application of \eqref{e_positivity_of_DAD_inverse} to the vector $Dx$ yields
\begin{equation*}
  \lambda | D x|^2 = Dx \cdot DAx = Dx \cdot DA D^{-1} Dx \geq \varrho |D x^2| >0,
\end{equation*}
which implies $\lambda >0$.\newline
Now, we will deduce \eqref{e_weighted_covariance_estiamte}. Because $A$ is symmetric, the inverse $A^{-1}$ also is symmetric. Therefore, an application of Theorem \ref{p_decay_of_correlations_linear} yields the estimate
  \begin{align*}
    \cov_{\mu}(f,g) & \leq \sum_{i,j=1}^N \left(A^{-1} \right)_{ij}  \ \left( \int |\nabla_i f|^2 \ d \mu \right)^{\h} \left( \int |\nabla_j g|^2 \ d \mu \right)^{\h} \\
    & = \sum_{i,j=1}^N d_j \left(A^{-1}\right)_{ji}  d_i^{-1}  \ \left( \int |d_i \nabla_i f|^2 \ d \mu \right)^{\h} \left( \int |d_j^{-1} \nabla_j g|^2 \ d \mu \right)^{\h} \\
    & = DA^{-1}D^{-1} z \cdot \tilde z \\
    & \leq |DA^{-1}D^{-1} z| \ |\tilde z|,
  \end{align*}
  where the vectors $z, \tilde z \in \R^N$ are defined for $i,j \in \{1, \ldots, N \}$ by
  \begin{equation*}
z_i:= \left( \int |d_i \nabla_i f|^2 \ d \mu \right)^{\h} \qquad \mbox{and} \qquad \tilde z_j :=\left( \int |d_j^{-1} \nabla_j g|^2 \ d \mu \right)^{\h}.
  \end{equation*}
  Therefore, \eqref{e_weighted_covariance_estiamte} is verified provided 
  \begin{equation}
    \label{e_weighted_A_estimate}
    | D A^{-1} D^{-1}  z | \leq \frac{1}{\varrho} \ |z|
  \end{equation}
  holds for any $z \in \R^N$. From the hypothesis \eqref{e_positivity_of_DAD_inverse} it follows that
  \begin{align*}
    \varrho \  z \cdot z & \leq D A D^{-1} z \cdot z    \\
    & \leq  |D A D^{-1} z | \ |z|.
  \end{align*}
 Hence, we have  
 \begin{equation*}
 | z | \leq \frac{1}{\varrho} \ |D A D^{-1} z |,  
 \end{equation*}
which immediately yields \eqref{e_weighted_A_estimate}.
\end{proof}

\begin{proof}[Direct proof of Corollary~ \ref{p_decay_of_correlations_example} using only Theorem~\ref{p_decay_of_correlations_linear}]
  Let us fix two indices $i,j \in \{1, \ldots, N \}$. Let $f$ and $g$ be arbitrary functions just depending on $x_i$ and $x_j$ respectively. We apply Theorem \ref{p_decay_of_correlations_linear} and get
  \begin{equation}
    \label{e_estimation_covariance_corollary}
 \cov_{\mu} (f,g) \leq \left( A^{-1} \right)_{ij}  \left( \int |\nabla_i f|^2 \ d \mu \right)^{\h} \left( \int | \nabla_j g|^2 \ d \mu \right)^{\h},   
  \end{equation}
  where $A$ is defined as in \eqref{e_definition_of_A}. Therefore, it remains to estimate the element $\left( A^{-1} \right)_{ij}$. By Neumann series (also called the random walk expansion of $A^{-1}$ (cf. \cite{BFS})) we have
\begin{align}
  \left( A^{-1} \right)_{ij} & = \delta_{ij} \frac{1}{\varrho_i} + \frac{\kappa_{ij}}{\varrho_i \varrho_j} + \sum_{s=1}^N \frac{\kappa_{is} \kappa_{sj}}{\varrho_i \varrho_s \varrho_j} + \sum_{s,l =1}^N \frac{\kappa_{is} \kappa_{s l} \kappa_{l j}}{\varrho_i \varrho_{s} \varrho_{l} \varrho_j} + \cdots \cdots \notag \\
  & = \delta_{ij} \frac{1}{\varrho_i} +  \frac{e^{- \delta (i,j)}}{e^{- \delta (i,j)}} \frac{\kappa_{ij}}{\varrho_i \varrho_j} + \sum_{s=1}^N  \frac{e^{- \delta (i,s)}e^{- \delta (s,j)}}{e^{- \delta (i,s)}e^{- \delta (s,j)}} \frac{\kappa_{is} \kappa_{sj}}{\varrho_i \varrho_s \varrho_j}  \notag \\
  &  \quad  + \sum_{s,l =1}^N \frac{e^{- \delta (i,s)}e^{- \delta (s,l)}e^{- \delta (l,j)}}{e^{- \delta (i,s)}e^{- \delta (s,l)}e^{- \delta (l,j)}}  \frac{\kappa_{is} \kappa_{s l} \kappa_{l j}}{\varrho_i \varrho_{s} \varrho_{l} \varrho_j} + \cdots \cdots. \label{e_estimation_inverse_element_step_0} 
\end{align}
By the triangle inequality we get
\begin{equation*}
e^{- \delta (i,s)}e^{- \delta (s,j)} \leq e^{- \delta (i,j)}  
\end{equation*}
for all $i,s,j \in \{1, \ldots, N \}$. Hence, we can continue the estimation of \eqref{e_estimation_inverse_element_step_0} as
\begin{align}
  \left( A^{-1} \right)_{ij} & \leq e^{-  \delta (i,j)} \left(\tilde A^{-1} \right)_{ij}, \label{e_estimation_inverse_element_step_1}
\end{align}
where $\tilde A$ is defined as in \eqref{e_definition_of_tilde_A}. By \eqref{e_positivity_of_tilde_A} we have the bound
\begin{equation*}
 \left(\tilde A^{-1} \right)_{ij} \leq \frac{1}{\tilde \varrho},  
\end{equation*}
which together with \eqref{e_estimation_covariance_corollary} and \eqref{e_estimation_inverse_element_step_1} finishes the proof.
\end{proof}

\subsection{Algebraic decay of correlations}\label{s_sub_algebraic_decay_corr}

In this section we show how Theorem~\ref{p_decay_of_correlations_linear} can be used to deduce an algebraic decay of correlations in the case of algebraically decaying interaction. Because in the article~\cite{OR_rev} the statement of Proposition~\ref{p_algebraic_decay_correlations} is applied to a $d$-dimensional lattice system, we change the notation a little bit.   
\begin{proposition}\label{p_algebraic_decay_correlations} 
Let $\Lambda \subset \mathds{Z}^d$ an arbitrary finite subset of the $d$-dimensional lattice $\mathds{Z}^d$. We consider a probability measure $d\mu:= Z^{-1} \exp (-H(x)) \ dx$ on $\mathds{R}^\Lambda$. We assume that
\begin{itemize}
\item the conditional measures $\mu(dx_i | \bar x_i )$, $i \in \Lambda $, satisfy a uniform PI with constant $\varrho_i>0$.
\item the numbers $\kappa_{ij}$, $i \neq j, i,j \in \Lambda$, satisfy
   \begin{equation*}
|\nabla_i \nabla_j H(x)|\leq \kappa_{ij}  < \infty     
   \end{equation*}
uniformly in $x \in \mathds{R}^\Lambda$. Here, $|\cdot|$ denotes the operator norm of a bilinear form. 
\item the numbers $\kappa_{ij}$ decay algebraically in the sense of
  \begin{align}
    \label{e_algeb_decay_of_kappa}
    \kappa_{ij} \lesssim \frac{1}{|i-j|^{d+\alpha} +1} 
  \end{align}
for some $\alpha>0$.
 \item the symmetric matrix $A=(A_{ij})_{N \times N}$ defined by
  \begin{equation*} 
A_{ij} =
\begin{cases}
  \varrho_i, & \mbox{if }\;  i=j , \\
  -\kappa_{ij}, & \mbox{if } \; i< j,
\end{cases}
  \end{equation*}
is strictly diagonally dominant  i.e.~for some $\delta > 0$ it holds for any $i \in \Lambda$ 
\begin{equation}\label{e_strictly_diag_dominant_A}
  \sum_{j \in \Lambda, j \neq i} |A_{ij}| + \delta \le A_{ii}. 
\end{equation}
\end{itemize}
Then for all functions $f=f(x_i)$ and $g=g(x_j)$, $i, j \in \Lambda$, 
   \begin{equation}
     \label{e_covariance_decay_algebraic}
      | \cov_{\mu}(f,g) | \lesssim (A^{-1})_{ij}   \left( \int |\nabla_i f|^2 \ d \mu \right)^{\h} \left( \int |\nabla_j g |^2 \ d \mu \right)^{\h}
   \end{equation}
and for any $i, j \in \Lambda$
\begin{align}
      \label{e_decay_M_inverse}
      |(A^{-1})_{ij}| \lesssim \frac{1}{|i-j|^{d + \tilde \alpha}+1},
    \end{align}
    for some $\tilde \alpha >0$.
  \end{proposition}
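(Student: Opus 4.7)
The first estimate~\eqref{e_covariance_decay_algebraic} is a direct consequence of Theorem~\ref{p_decay_of_correlations_linear}: when $f=f(x_i)$ and $g=g(x_j)$ depend only on a single coordinate, all gradients $\nabla_s f$ with $s\neq i$ and $\nabla_s g$ with $s\neq j$ vanish, so the double sum in~\eqref{e_covariance estimate} collapses to the single term $(A^{-1})_{ij}\,(\int|\nabla_i f|^2\,d\mu)^{\h}(\int|\nabla_j g|^2\,d\mu)^{\h}$. The substantive content of the proposition is therefore~\eqref{e_decay_M_inverse}, a purely linear-algebraic statement about the off-diagonal decay of inverses of strictly diagonally dominant matrices whose off-diagonal entries decay polynomially.

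I would approach~\eqref{e_decay_M_inverse} through the random walk expansion of $A^{-1}$ already used in the proof of Corollary~\ref{p_decay_of_correlations_example}. Writing $A=D-K$ with $D=\diag(\varrho_i)$ and $K_{ij}=\kappa_{ij}\setone_{i\neq j}$, the strict diagonal dominance~\eqref{e_strictly_diag_dominant_A} ensures $\|D^{-1}K\|_{\infty}\leq q<1$, so that the Neumann series
\begin{equation*}
(A^{-1})_{ij}\;=\;\sum_{n=0}^{\infty}\ \sum_{i=i_0,i_1,\ldots,i_n=j}\;\frac{\kappa_{i_0 i_1}\,\kappa_{i_1 i_2}\cdots\kappa_{i_{n-1}i_n}}{\varrho_{i_0}\varrho_{i_1}\cdots\varrho_{i_n}}
\end{equation*}
converges absolutely.

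To promote this $\ell^{\infty}$-convergence to the pointwise polynomial decay in $|i-j|$, I would use the hypothesis $\kappa_{ij}\leq C\,a_{ij}$, where $a_{ij}:=(|i-j|^{d+\alpha}+1)^{-1}$, together with the standard convolution closure
$\sum_{k\in\mathbb{Z}^d} a_{ik}\,a_{kj}\leq C_0\,a_{ij}$,
which holds because $\alpha>0$ (split the sum over $k$ according to whether $k$ is closer to $i$ or to $j$: in the ``far'' region one factor is pointwise dominated by $a_{ij}$ while the remaining sum is bounded by $\sum_k a_{ik}\lesssim 1$). The crucial step is to decompose each walk $i=i_0,i_1,\ldots,i_n=j$ in the Neumann series according to the index $\ell\in\{0,\ldots,n-1\}$ at which the walker performs its longest jump. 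By pigeonhole this jump has length $\geq |i-j|/n$ and therefore contributes a factor $\lesssim n^{d+\alpha}\,a_{ij}$, while the remaining $n-1$ steps are summed using the row-sum bound and produce the geometric factor $q^{n-1}$. Summing over $\ell\in\{0,\ldots,n-1\}$ and over $n\geq 0$ then yields $(A^{-1})_{ij}\lesssim a_{ij}$, which is~\eqref{e_decay_M_inverse} with $\tilde\alpha$ arbitrarily close to $\alpha$.

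The main obstacle is combining two very different types of decay --- the geometric one in $n$ coming from strict diagonal dominance, and the polynomial one in $|i-j|$ coming from the hypothesis on $\kappa$. A naive iteration of the pointwise convolution estimate alone gives $((D^{-1}K)^n)_{ij}\lesssim (C\,C_0/\varrho_{\min})^n a_{ij}$, which is not summable in $n$ unless the coupling is small, a hypothesis absent from the proposition. The largest-jump decomposition circumvents this by spending the polynomial price at exactly one step of the walk while letting the other $n-1$ steps contribute only through the benign geometric factor $q^{n-1}$. As an alternative, one could invoke Jaffard's classical theorem on off-diagonal decay of inverses of polynomially localized matrices, which together with $A\geq\delta\,\Id$ (itself a direct consequence of~\eqref{e_strictly_diag_dominant_A}) immediately yields~\eqref{e_decay_M_inverse}.
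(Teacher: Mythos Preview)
Your approach is essentially the same as the paper's: both reduce~\eqref{e_covariance_decay_algebraic} to Theorem~\ref{p_decay_of_correlations_linear} and then attack~\eqref{e_decay_M_inverse} via the Neumann series together with the pigeonhole observation that some step of an $n$-step walk from $i$ to $j$ must have length $\gtrsim |i-j|/n$, bounding the remaining steps by the row-sum constant $q<1$. The only difference is organizational: the paper records the weaker term bound $T_k\lesssim (k+1)^{d+\alpha+1}|i-j|^{-(d+\alpha)}$ (dropping the surviving factor $q^{k-1}$), and is therefore forced to split $\sum_k T_k$ at $k\sim (d+\alpha)\log|i-j|/|\log q|$ and use $T_k\le q^k$ on the tail, ending up with $\tilde\alpha=\alpha/2$; your variant keeps the factor $q^{n-1}$ alongside the polynomial loss, so $\sum_n n^{d+\alpha+1}q^{n-1}<\infty$ can be summed directly and yields $\tilde\alpha=\alpha$, a small sharpening.
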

 
\begin{proof}[Proof of Proposition~\ref{p_algebraic_decay_correlations}]
  Because the matrix $A$ is strictly diagonal dominant in the sense of~\eqref{e_strictly_diag_dominant_A} by assumption, the matrix $A$ is also positive definite. Therefore an application of Theorem~\ref{p_decay_of_correlations_linear} directly yields the estimate~\eqref{e_covariance_decay_algebraic}. So, it is only left to deduce the estimate~\eqref{e_decay_M_inverse}. As in the proof of Corollary~\ref{p_decay_of_correlations_example} the Neumann series representation of $A^{-1}$ yields for $i \neq j$
\begin{align}
  \left( A^{-1} \right)_{ij} & = \underbrace{\frac{\kappa_{ij}}{\varrho_i \varrho_j}}_{=: T_0} + \underbrace{\sum_{s\in \Lambda} \frac{\kappa_{is} \kappa_{sj}}{\varrho_i \varrho_s \varrho_j}}_{=:T_1} + \underbrace{\sum_{s_1, s_2 \in \Lambda} \frac{\kappa_{is_1} \kappa_{s_1 s_2} \kappa_{s_2 j}}{\varrho_i \varrho_{s_1} \varrho_{s_2} \varrho_j}}_{=:T_2} + \cdots \cdots  \label{e_represent_A_inverse} \\
& = \sum_{k=0}^\infty T_k.   \notag
\end{align}
It follows from our assumption~\eqref{e_strictly_diag_dominant_A} that
\begin{align}
  \frac{\kappa_{\tilde m n}}{\varrho_n}  \leq \sum_{m \in \Lambda} \frac{\kappa_{nm}}{\varrho_n} \leq  c <1 \quad \mbox{uniformly in }  n, \tilde m \in \Lambda. \label{e_neumann_bound_diag_dominant}
\end{align}
Therefore we get the estimate
\begin{align*}
  T_k \leq c^k .
\end{align*}
Let $\tilde n$ denote the smallest integer larger than $\frac{\log |i-j|^{d + \alpha} }{|\log c|}$. Then we have
\begin{align}
  \sum_{k=\tilde n}^\infty T_k  \leq c^{\tilde n} \sum_{k=0}^\infty c^k  \leq \frac{1}{|i-j|^{d+\alpha}} C. \label{e_est_A_inverse_first_part}
\end{align}
Considering~\eqref{e_represent_A_inverse} it only remains to estimate~$\sum_{k=0}^{\tilde n} T_k$. Assume for the moment that 
\begin{align}\label{e_est_T_k_A_inverse}
  T_k \leq C \ \frac{(k+1)^{d+\alpha +1 }}{|i-j|^{d+\alpha}}
\end{align}
uniform in $k \in \mathbb{N}$. Then we get the estimate
\begin{align}
\label{e_est_A_inverse_second_part}  \sum_{k=0}^{\tilde n} T_k &\leq  C  \frac{{(\tilde n +1)}^{d+ \alpha +1}}{|i-j|^{d+\alpha}} \\
& \leq C \frac{(\log |i-j|^{d + \alpha} +1 )^{d+\alpha+1}}{|i-j|^{d+\alpha}} \leq C \frac{1}{|i-j|^{d+\frac{\alpha}{2}}}. \notag
\end{align}
A combination of \eqref{e_represent_A_inverse},~\eqref{e_est_A_inverse_first_part}, and~\eqref{e_est_A_inverse_second_part} yields the desired statement~\eqref{e_decay_M_inverse}. \medskip

In order to complete the argument we have to the estimate~\eqref{e_est_T_k_A_inverse}. Consider the multi-indexes $i, s_1, \ldots s_k,j \in \Lambda \subset \mathds{Z}^d$. For convenience we set $s_0 =i$ and $s_{k+1}=j$. Let $\tilde n$ be the integer such that 
\begin{align*}
|i_{\tilde n} - j_{\tilde n}|  = \max (|i_l - j_l| \ l \in \left\{1, \ldots, d \right\} ) .
\end{align*}
Then there is at least one pair of $(s_0,s_1)$, $(s_1,s_2))$, $\ldots$, $(s_{k-1}, s_k)$, or $s_k, s_{k+1}$ that satisfies the estimate
\begin{align*}
  |(s_l)_{\tilde n} - (s_{l+1})_{\tilde n}| \geq \frac{1}{k+1} |i_{\tilde n}-j_{\tilde n}|.
\end{align*}
By the equivalence of norms in finite-dimensional vector-spaces the last inequality yields
\begin{align} \label{e_est_one_has_to_satisfy}
  |s_l - s_{l+1}| \geq C \frac{1}{k+1} |i-j|.
\end{align}
Therefore we have 
\begin{align*}
  T_k & = \sum_{s_1, \ldots, s_k \in \Lambda} \frac{\kappa_{s_0 s_1} \kappa_{s_1 s_2} \ldots  \kappa_{s_{k} s_{k+1}}}{\varrho_i \varrho_{s_1} \ldots \varrho_{k} \varrho_j} \\
  & \leq \sum_{\substack{s_1, \ldots, s_k \in \Lambda \\ (s_0,s_1) \mbox{ \tiny satisfies }\eqref{e_est_one_has_to_satisfy}}} \frac{\kappa_{s_0 s_1} \kappa_{s_1 s_2} \ldots  \kappa_{s_{k} s_{k+1}}}{\varrho_i \varrho_{s_1} \ldots \varrho_{k} \varrho_j} \\
  & \qquad + \sum_{\substack{s_1, \ldots, s_k \in \Lambda \\ (s_1,s_2) \mbox{ \tiny satisfies }\eqref{e_est_one_has_to_satisfy}}} \frac{\kappa_{s_0 s_1} \kappa_{s_1 s_2} \ldots  \kappa_{s_{k} s_{k+1}}}{\varrho_i \varrho_{s_1} \ldots \varrho_{k} \varrho_j} \\
& \qquad + \ldots \ +  \sum_{\substack{s_1, \ldots, s_k \in \Lambda \\ (s_k,s_{k+1}) \mbox{ \tiny satisfies }\eqref{e_est_one_has_to_satisfy}}} \frac{\kappa_{s_0 s_1} \kappa_{s_1 s_2} \ldots  \kappa_{s_{k} s_{k+1}}}{\varrho_i \varrho_{s_1} \ldots \varrho_{k} \varrho_j} .
\end{align*}
We show how the second term on the right hand side can be estimated. The estimation of the other terms works almost the same, hence we skip it. We have
\begin{align*}
&  \sum_{\substack{s_1, \ldots, s_k \in \Lambda \\ (s_1,s_2) \mbox{ \tiny satisfies }\eqref{e_est_one_has_to_satisfy}}} \frac{\kappa_{s_0 s_1} \kappa_{s_1 s_2} \ldots  \kappa_{s_{k} s_{k+1}}}{\varrho_i \varrho_{s_1} \ldots \varrho_{k} \varrho_j} \\
 & \ \overset{~\eqref{e_algeb_decay_of_kappa}}{\leq}  C \sum_{\substack{s_1, \ldots, s_k \in \Lambda \\ (s_1,s_2) \mbox{ \tiny satisfies }\eqref{e_est_one_has_to_satisfy}}}  \frac{1}{|s_1 - s_2|^{d+\alpha} +1} \ \frac{\kappa_{s_0 s_1} \kappa_{s_2 s_3} \ldots  \kappa_{s_{k} s_{k+1}}}{\varrho_i \varrho_{s_1} \ldots \varrho_{k} \varrho_j} \\
& \  \overset{\eqref{e_est_one_has_to_satisfy}}{\leq} C  \frac{(k+1)^{d + \alpha}}{|i - j|^{d+\alpha} +1} \sum_{s_1, \ldots, s_k \in \Lambda }   \ \frac{\kappa_{s_0 s_1} \kappa_{s_2 s_3} \ldots  \kappa_{s_{k} s_{k+1}}}{\varrho_i \varrho_{s_1} \ldots \varrho_{k} \varrho_j} \\
& \  \overset{\eqref{e_neumann_bound_diag_dominant}}{\leq} C  \frac{(k+1)^{d + \alpha}}{|i - j|^{d+\alpha} +1}.
\end{align*}
With similar bounds for the other terms we get the desired estimate
\begin{align*}
    T_k \leq C  \frac{(k+1)^{d + \alpha +1}}{|i - j|^{d+\alpha} +1},
\end{align*}
which closes the argument.
\end{proof}

\appendix
\section{The criterion of Bakry-\'Emery and the Holley-Stroock perturbation principle}\label{s_BE_HS}

In this section we state the criterion Bakry-\'Emery and the Holley-Stroock perturbation principle, which we used in the main part of this article to deduce the PI for certain measures. Because we only work with the PI in this article we state those criteria for the PI. However, note that both criteria also hold on the stronger level of the LSI. The \emph{Bakry-\'Emery criterion} connects convexity of the Hamiltonian to the validity of the~PI.
\begin{theorem}[Bakry-\'Emery criterion {\cite[Proposition 3, Corollaire 2]{BE85}}]\label{local:thm:BakryEmery}
 Let $H: D \to \mathbb{R}$ be a Hamiltonian with Gibbs measure $$\mu(dx)=Z_\mu^{-1} \exp\left( -\varepsilon^{-1} H(x) \right) \ dx$$ on a convex domain $D$ and assume that $\nabla^2 H(x) \geq \lambda >0$ for all $x \in \mathbb{R}^n$. Then $\mu$ satisfies PI with constant $\varrho$ satisfying
 \begin{equation}
  \varrho \geq \frac{\lambda}{\eps} . 
 \end{equation}
\end{theorem}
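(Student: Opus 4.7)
The plan is to run the classical Bakry-\'Emery $\Gamma_2$ argument via the overdamped Langevin semigroup naturally associated with $\mu$. For transparency I describe the argument when $D=\R^n$; on a proper convex domain, the same scheme goes through with Neumann boundary conditions, and convexity of $\partial D$ ensures that the boundary terms appearing in the integration by parts below have a favourable sign and can be dropped.

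First I would introduce the generator $L := \Delta - \eps^{-1} \nabla H \cdot \nabla$, which is symmetric on $L^2(\mu)$ with Dirichlet form $\int |\nabla f|^2 \, d\mu$, and its associated Markov semigroup $P_t = e^{tL}$. Ergodicity yields $P_t f \to \int f \, d\mu$ in $L^2(\mu)$, and differentiating $t \mapsto \int (P_t f)^2 \, d\mu$ then gives the representation
\begin{equation*}
\var_\mu(f) \ =\ 2 \int_0^\infty \int |\nabla P_t f|^2 \, d\mu \, dt.
\end{equation*}
The goal is to control the inner integral exponentially in $t$ by $\int |\nabla f|^2 \, d\mu$.

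The heart of the argument is the Bochner-type identity
\begin{equation*}
\tfrac{1}{2} L|\nabla f|^2 \ - \ \nabla f \cdot \nabla L f \ = \ |\Hess f|^2 \ + \ \eps^{-1} \Hess H(\nabla f, \nabla f),
\end{equation*}
a direct computation in coordinates using $L = \Delta - \eps^{-1}\nabla H \cdot \nabla$. Under the hypothesis $\Hess H \geq \lambda \Id$, the right-hand side is bounded below by $(\lambda/\eps)\, |\nabla f|^2$, which is precisely the curvature condition $\Gamma_2 \geq (\lambda/\eps)\,\Gamma$. From this I would derive the gradient commutation estimate $|\nabla P_t f|^2 \leq e^{-2\lambda t/\eps}\, P_t |\nabla f|^2$ by fixing $t>0$, setting $\phi(s) := e^{-2\lambda s/\eps}\, P_{t-s}|\nabla P_s f|^2$ for $s\in[0,t]$, and verifying via the Bochner identity that $\phi'(s)\geq 0$; comparison of $\phi(0)$ with $\phi(t)$ then yields the stated commutation.

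Finally, integrating the commutation against $\mu$ and using $\mu$-invariance of $P_t$ gives $\int |\nabla P_t f|^2 \, d\mu \leq e^{-2\lambda t/\eps} \int |\nabla f|^2 \, d\mu$, and substituting into the variance representation yields
\begin{equation*}
\var_\mu(f) \ \leq \ 2 \int_0^\infty e^{-2\lambda t/\eps} \, dt \ \int |\nabla f|^2 \, d\mu \ = \ \frac{\eps}{\lambda} \int |\nabla f|^2 \, d\mu,
\end{equation*}
i.e.\ the PI with constant $\varrho \geq \lambda/\eps$. The main obstacle is purely technical: one has to justify enough smoothness and decay of $P_t f$ so that the pointwise derivatives, the Bochner identity, and the integrations by parts are all rigorous, and, on a proper convex domain, check that the Neumann boundary contribution arising in the $\phi'$ computation is the second fundamental form of $\partial D$ evaluated on $\nabla P_s f$, which is non-negative by convexity and hence may be discarded. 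This last point is standard and can be handled by a truncation-and-approximation argument on the class of test functions for which both sides of the PI are finite.
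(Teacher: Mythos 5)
The paper does not prove this theorem; it simply states it with a citation to Bakry--\'Emery \cite{BE85}, so there is no in-paper proof to compare against. Your route (the $\Gamma_2$/semigroup argument) is exactly the standard one and is the right thing to do here. The representation $\var_\mu(f) = 2\int_0^\infty \int |\nabla P_t f|^2\, d\mu\, dt$, the Bochner identity, and the interpolation-along-the-semigroup scheme are all correctly set up, and the remarks about Neumann boundary conditions and the sign of the second fundamental form on a convex domain are the right technical points to flag.

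There is, however, a concrete sign error in the key step. With $\phi(s) := e^{-2\lambda s/\eps} P_{t-s}|\nabla P_s f|^2$, the product rule gives
\begin{equation*}
\phi'(s) = e^{-2\lambda s/\eps} P_{t-s}\Bigl( -\tfrac{2\lambda}{\eps}\,|\nabla P_s f|^2 \;-\; 2\,\Gamma_2(P_s f) \Bigr),
\end{equation*}
which under $\Gamma_2 \geq (\lambda/\eps)\Gamma$ is \emph{nonpositive}, not nonnegative as you claim; and even if one took $\phi' \geq 0$ at face value, comparing $\phi(0)=P_t|\nabla f|^2$ with $\phi(t)=e^{-2\lambda t/\eps}|\nabla P_t f|^2$ would give the reverse of the desired commutation bound. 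The fix is to use $\phi(s) := e^{+2\lambda s/\eps} P_{t-s}|\nabla P_s f|^2$, for which $\phi' \leq 0$ and $\phi(t) \leq \phi(0)$ indeed yields $|\nabla P_t f|^2 \leq e^{-2\lambda t/\eps} P_t |\nabla f|^2$. Alternatively, and slightly more economically for PI (as opposed to LSI), you can skip the pointwise commutation entirely: integrating the Bochner identity against $\mu$ and using $\mu$-invariance and symmetry gives $\frac{d}{dt}\int |\nabla P_t f|^2\, d\mu = -2\int \Gamma_2(P_t f)\, d\mu \leq -\tfrac{2\lambda}{\eps}\int |\nabla P_t f|^2\, d\mu$, and Gr\"onwall plus the variance representation finishes the proof with no interpolating function at all. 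It is also worth noting that, since the statement here is only the Poincar\'e inequality, the very shortest proof in the present context is to invoke the Brascamp--Lieb inequality (Theorem~\ref{p_brascamp_lieb}): $\Hess(\eps^{-1}H) \geq (\lambda/\eps)\Id$ gives $(\Hess(\eps^{-1}H))^{-1} \leq (\eps/\lambda)\Id$, and \eqref{e_brascamp_lieb} immediately yields PI with constant $\lambda/\eps$.
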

In non-convex cases the standard tool to deduce the PI is the \emph{Holley-Stroock perturbation principle}.
\begin{theorem}[Holley-Stroock perturbation principle {\cite[p. 1184]{HS}}]\label{local:thm:HolleyStroock}
  Let $H$ be a Hamiltonian with Gibbs measure $$\mu(dx)=Z_\mu^{-1} \exp\left( -\eps^{-1} H(x) \right) \ dx.$$ Further, let $\tilde H$ denote a bounded perturbation of $H$ and let $\tilde \mu_\varepsilon$ denote the Gibbs measure associated to the Hamiltonian $\tilde H$.
 If $\mu$ satisfies PI with constant $\varrho$ then also $\tilde\mu$ satisfies the PI with constant$\tilde \varrho$, where the constants satisfies the bound
 \begin{equation}\label{local:e:HolleyStroockPI-LSI}
\tilde \varrho \geq \exp \left( -\varepsilon^{-1} \osc (H- \tilde H)\right) \varrho,
 \end{equation}
 where $\osc (H - \tilde H) := \sup (H - \tilde H) - \inf (H - \tilde H)$.
\end{theorem}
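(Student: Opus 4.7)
The plan is to prove the Holley--Stroock perturbation bound by a two-sided comparison of the Radon--Nikodym derivative $d\tilde\mu/d\mu$, which interchanges variance under $\tilde\mu$ with variance under $\mu$, and Dirichlet energy under $\mu$ with Dirichlet energy under $\tilde\mu$. Set $V := \tilde H - H$ and let $m := \inf V$, $M := \sup V$, so that $\osc V = M - m = \osc(H - \tilde H)$. Since
\begin{equation*}
\frac{d\tilde\mu}{d\mu}(x) = \frac{Z_\mu}{Z_{\tilde\mu}}\, \exp\!\left(-\eps^{-1} V(x)\right),
\end{equation*}
the bounds $e^{-\eps^{-1}M} \le e^{-\eps^{-1}V} \le e^{-\eps^{-1}m}$ give pointwise two-sided control of the density and, after inverting, of $d\mu/d\tilde\mu$ as well.

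First I would exploit the variational characterization of variance: for any constant $c$ and any smooth $f$,
\begin{equation*}
\var_{\tilde\mu}(f) \le \int (f-c)^2 \, d\tilde\mu.
\end{equation*}
Choosing $c = \int f\, d\mu$ and writing the integral on the right with respect to $\mu$ via the density, I obtain
\begin{equation*}
\var_{\tilde\mu}(f) \le \int (f-c)^2 \frac{d\tilde\mu}{d\mu}\, d\mu \le \frac{Z_\mu}{Z_{\tilde\mu}}\, e^{-\eps^{-1} m}\, \var_\mu(f).
\end{equation*}
The second key step is to apply the PI for $\mu$, which yields $\var_\mu(f) \le \varrho^{-1}\int|\nabla f|^2\, d\mu$. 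Finally I switch the Dirichlet energy back to $\tilde\mu$ using
\begin{equation*}
\int |\nabla f|^2\, d\mu = \int |\nabla f|^2 \frac{d\mu}{d\tilde\mu}\, d\tilde\mu \le \frac{Z_{\tilde\mu}}{Z_\mu}\, e^{\eps^{-1} M} \int |\nabla f|^2\, d\tilde\mu.
\end{equation*}

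Concatenating these three inequalities the normalization ratios $Z_\mu/Z_{\tilde\mu}$ and $Z_{\tilde\mu}/Z_\mu$ cancel, and one is left with
\begin{equation*}
\var_{\tilde\mu}(f) \le \frac{1}{\varrho}\, e^{\eps^{-1}(M-m)} \int |\nabla f|^2\, d\tilde\mu = \frac{1}{\varrho}\, e^{\eps^{-1}\osc(H-\tilde H)} \int |\nabla f|^2\, d\tilde\mu,
\end{equation*}
which is exactly the claim with $\tilde\varrho \ge \varrho\, e^{-\eps^{-1}\osc(H-\tilde H)}$.

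There is no real analytic obstacle in this argument; it is purely a density-comparison plus the PI for $\mu$. The only point that deserves a careful word in the write-up is the cancellation of the two partition-function ratios (which is automatic since one appears as $Z_\mu/Z_{\tilde\mu}$ in the variance step and as its reciprocal in the Dirichlet-energy step) and the observation that $\osc V$ is invariant under $V \mapsto -V$, so it makes no difference whether one works with $\tilde H - H$ or $H - \tilde H$. A minor technical remark is that one should restrict to $f \in H^1(\tilde\mu)$ for which all integrals are finite; the bounded density ensures $H^1(\mu) = H^1(\tilde\mu)$ so this causes no issue.
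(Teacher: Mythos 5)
Your proof is correct and is the standard Holley--Stroock argument: bound the density ratio pointwise, use the variational characterization of variance to transfer $\var_{\tilde\mu}$ to $\var_\mu$, apply PI for $\mu$, then transfer the Dirichlet energy back, with the partition-function ratios cancelling. The paper states this theorem in the appendix as a cited classical result (from \cite{HS}) and does not include a proof, so there is nothing to compare against; your write-up supplies precisely the expected argument, and the remarks on $\osc$ being symmetric under $V\mapsto -V$ and on $H^1(\mu)=H^1(\tilde\mu)$ are the right details to address.
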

The perturbation principle of Holley-Stroock~\cite{HS} allows to deduce the PI constants of non-convex Hamiltonian from the PI of an appropriately convexified Hamiltonian. However due to its perturbative nature, the dependence of the PI constant~$\tilde \varrho$ usually is bad in physical parameters like system size or temperature.

\begin{acknowledgement}
The author wants to thank Felix Otto for working with him and finding
out a simple proof of
Theorem~\ref{p_decay_of_correlations_linear}. Additionally, the author
wants to thank Maria Westdickenberg (ne\'e Reznikoff) and Christian
Loeschcke for the fruitful and inspiring discussions on this
topic. The author was financially supported by the Deutsche
Forschungsgemeinschaft through the Gottfried Wilhelm Leibniz program
and partially by the Bonn International Graduate School in Mathematics during the years 2007 to 2009, where most of the content of this article originated.   
\end{acknowledgement}

\bibliographystyle{amsalpha}
\bibliography{b-l_covariance_estimate.bib}

\providecommand{\bysame}{\leavevmode\hbox to3em{\hrulefill}\thinspace}
\providecommand{\MR}{\relax\ifhmode\unskip\space\fi MR }
\providecommand{\MRhref}[2]{%
  \href{http://www.ams.org/mathscinet-getitem?mr=#1}{#2}
}
\providecommand{\href}[2]{#2}
\begin{thebibliography}{Men11b}

\bibitem[B{\'E}85]{BE85}
D.~Bakry and M.~{\'E}mery, \emph{Diffusions hypercontractives}, S\'eminaire de
  probabilit\'es, {XIX}, 1983/84, Lecture Notes in Math., vol. 1123, Springer,
  Berlin, 1985, pp.~177--206.

\bibitem[BFS82]{BFS}
D.~Brydges, J.~Fr{\"o}hlich, and T.~Spencer, \emph{The random walk
  representation of classical spin systems and correlation inequalities}, Comm.
  Math. Phys. \textbf{83} (1982), no.~1, 123--150. \MR{MR648362}

\bibitem[BH99]{B-H1}
T.~Bodineau and B.~Helffer, \emph{The log-{S}obolev inequality for unbounded
  spin systems}, J. Funct. Anal. \textbf{166} (1999), no.~1, 168--178.
  \MR{MR1704666}

\bibitem[BH00]{B-H2}
\bysame, \emph{Correlations, spectral gap and log-{S}obolev inequalities for
  unbounded spins systems}, Differential equations and mathematical physics
  ({B}irmingham, {AL}, 1999), AMS/IP Stud. Adv. Math., vol.~16, Amer. Math.
  Soc., Providence, RI, 2000, pp.~51--66. \MR{MR1764741}

\bibitem[BL76]{BL}
H.~J. Brascamp and E.~H. Lieb, \emph{On extensions of the {B}runn-{M}inkowski
  and {P}r\'ekopa-{L}eindler theorems, including inequalities for log concave
  functions, and with an application to the diffusion equation}, J. Funct.
  Anal. \textbf{22} (1976), no.~4, 366--389. \MR{MR0450480}

\bibitem[Bob99]{Bob}
S.~G. Bobkov, \emph{Isoperimetric and analytic inequalities for log-concave
  probability measures}, Ann. Probab. \textbf{27} (1999), no.~4, 1903--1921.
  \MR{1742893}

\bibitem[Che08]{MFC}
M.~F. Chen, \emph{Spectral gap and logarithmic {S}obolev constant for
  continuous spin systems.}, Acta Math. Sin., Engl. Ser. \textbf{24} (2008),
  no.~5, 705--736.

\bibitem[Hel95]{He6}
B.~Helffer, \emph{Spectral properties of the {K}ac operator in large
  dimension}, Mathematical quantum theory. {II}. {S}chr\"odinger operators
  ({V}ancouver, {BC}, 1993), CRM Proc. Lecture Notes, vol.~8, Amer. Math. Soc.,
  Providence, RI, 1995, pp.~179--211. \MR{MR1332041}

\bibitem[Hel98]{He1}
\bysame, \emph{Remarks on decay of correlations and {W}itten {L}aplacians,
  {B}rascamp-{L}ieb inequalities and semiclassical limit}, J. Funct. Anal.
  \textbf{155} (1998), no.~2, 571--586. \MR{MR1624506}

\bibitem[Hel99]{He2}
\bysame, \emph{Remarks on decay of correlations and {W}itten {L}aplacians.
  {III}. {A}pplication to logarithmic {S}obolev inequalities}, Ann. Inst. H.
  Poincar\'e Probab. Statist. \textbf{35} (1999), no.~4, 483--508.
  \MR{MR1702239}

\bibitem[Hel02]{He_Book}
\bysame, \emph{Semiclassical analysis, {W}itten {L}aplacians, and statistical
  mechanics}, Series in Partial Differential Equations and Applications,
  vol.~1, World Scientific Publishing Co. Inc., River Edge, NJ, 2002.
  \MR{MR1936110}

\bibitem[HS87]{HS}
R.~Holley and D.~Stroock, \emph{Logarithmic {S}obolev inequalities and
  stochastic {I}sing models}, J. Statist. Phys. \textbf{46} (1987), no.~5-6,
  1159--1194. \MR{893137}

\bibitem[HS94]{HeS}
B.~Helffer and J.~Sj{\"o}strand, \emph{On the correlation for {K}ac-like models
  in the convex case}, J. Statist. Phys. \textbf{74} (1994), no.~1-2, 349--409.
  \MR{MR1257821}

\bibitem[Led01]{L}
M.~Ledoux, \emph{Logarithmic {S}obolev inequalities for unbounded spin systems
  revisted}, Sem. Probab. XXXV, Lecture Notes in Math., Springer \textbf{1755}
  (2001), 167--194.

\bibitem[Men11a]{Diss}
G.~Menz, \emph{Equilibrium dynamics of continuous unbounded spin systems},
  Ph.D. thesis, Rheinische Friedrich-Wilhelms-Universität Bonn, 2011.

\bibitem[Men11b]{Menz11}
\bysame, \emph{L{SI} for {K}awasaki dynamics with weak interaction}, Comm.
  Math. Phys. \textbf{307} (2011), no.~3, 817--860. \MR{2842967}

\bibitem[Men13]{OR_rev}
\bysame, \emph{The approach of {O}tto-{R}eznikoff revisited}, ArXive (2013),
  http://arxiv.org/abs/1309.0862.

\bibitem[OR07]{OR07}
F.~Otto and M.~G. Reznikoff, \emph{A new criterion for the logarithmic
  {S}obolev inequality and two applications}, J. Funct. Anal. \textbf{243}
  (2007), no.~1, 121--157.

\bibitem[OV00]{OV}
F.~Otto and C.~Villani, \emph{Generalization of an inequality by {T}alagrand
  and links with the logarithmic {S}obolev inequality}, J. Funct. Anal.
  \textbf{173} (2000), 361--400.

\bibitem[Yos99]{Yos99}
N.~Yoshida, \emph{The log-{S}obolev inequality for weakly coupled lattice
  fields}, Probab. Theory Related Fields \textbf{115} (1999), no.~1, 1--40.

\bibitem[Yos01]{Yos01}
\bysame, \emph{The equivalence of the log-{S}obolev inequality and a mixing
  condition for unbounded spin systems on the lattice}, Ann. Inst. H.
  Poincar\'e Probab. Statist. \textbf{37} (2001), no.~2, 223--243.

\bibitem[Zeg90]{Ze1}
B.~Zegarli{\'n}ski, \emph{Log-{S}obolev inequalities for infinite
  one-dimensional lattice systems}, Comm. Math. Phys. \textbf{133} (1990),
  no.~1, 147--162. \MR{MR1071239}

\bibitem[Zeg96]{Zeg96}
B.~Zegarlinski, \emph{The strong decay to equilibrium for the stochastic
  dynamics of unbounded spin systems on a lattice}, Comm. Math. Phys.
  \textbf{175} (1996), no.~2, 401--432.

\end{thebibliography}

\end{document}